\documentclass{amsart}

\usepackage{xcolor,enumerate}
\definecolor{darkgreen}{rgb}{0,0.5,0}

\usepackage[utf8]{inputenc}
\usepackage[T1]{fontenc}
\usepackage{amsmath,amssymb,amsfonts,amsthm,graphicx}
\usepackage{txfonts}
\usepackage{helvet}
\usepackage{subcaption}
\usepackage{graphicx,tikz}
\usepackage{booktabs}
\usepackage{tkz-fct}
\usepackage{tkz-euclide}
\usepackage{natbib}
\usepackage{url}
\usepackage{xspace}
\usepackage{pifont}
\usepackage[most]{tcolorbox}
\usepackage[plain]{algorithm}
\usepackage[noend]{algorithmic}
\usepackage{threeparttable}
\usepackage{empheq}
\usepackage{hyperref}

\newcommand{\cmark}{\ding{51}}%
\newcommand{\xmark}{\ding{55}}%
\algsetup{indent=2em}

\newcommand\subscr[2]{#1_{\textup{#2}}}

\newcommand{\map}[3]{#1:#2 \rightarrow #3}

\newcommand\algobox[1]{
\begin{tcolorbox}[colback=white,  colframe=gray, boxrule=1pt, arc=2pt]
#1
\end{tcolorbox}
}

\newcommand{\fL}{\subscr{f}{L}}
\newcommand{\real}{\mathbb{R}}

\newcommand{\X}{\ensuremath{\mathcal{X}}}
\newcommand{\G}{\mathcal{G}}
\newcommand{\E}{\mathcal{E}}
\newcommand{\I}{\mathcal{I}}

\newcommand{\oprocendsymbol}{\hbox{$\bullet$}}
\newcommand{\oprocend}{\relax\ifmmode\else\unskip\hfill\fi\oprocendsymbol}

\DeclareMathOperator*{\argmax}{arg\,max}
\usetikzlibrary{external}
\usetikzlibrary{decorations.shapes,arrows,calc,decorations.markings,shapes,decorations.pathreplacing,shapes.arrows}
\usetikzlibrary{positioning}
\tikzset{main node/.style={circle,fill=blue!20,draw,inner sep=1pt},}
\tikzset{highlight1/.style={rectangle,
                           fill=red!15,
                           rounded corners = 0.5 mm,
                           inner sep=1pt,
                           fit=#1}}

\newcommand\hlight[1]{\tikz[overlay, remember picture,baseline=-\the\dimexpr\fontdimen22\textfont2\relax]\node[rectangle,fill=blue!13,rounded corners,fill opacity = 0.2,draw,thick,text opacity =1] {$#1$};}

\newtcbox{\mymath}[1][]{%
    nobeforeafter, math upper,
    enhanced, colframe=green!70!black,
    colback=white, boxrule=1pt,
    #1}

\newtheorem{theorem}{Theorem}[section]
\newtheorem{definition}[theorem]{Definition}

\newtheorem{remark}[theorem]{Remark}

\newtheorem{example}[theorem]{Example}

\newtheorem{proposition}[theorem]{Proposition}

\newcommand{\pdertwo}[2]{\frac{\partial^2 #1}{\partial #2^2}}
\numberwithin{equation}{section}

\begin{document}

\title[Submodular Optimization and Decision and Control]{Submodular Optimization with Applications to Decision and Control}

\author[Shamak Dutta]{Shamak Dutta}
\address{University of Waterloo, Department of Electrical and Computer Engineering, Waterloo, ON, N2L 3G1, Canada}
\email{shamak.dutta@uwaterloo.ca}

\author[Bahman Gharesifard]{Bahman Gharesifard}
\address{Queen's University, Department of Mathematics and Statistics, Kingston, ON, Canada}
\email{bahman.gharesifard@queensu.ca}

\author[Stephen L. Smith]{Stephen L.\ Smith}
\address{University of Waterloo, Department of Electrical and Computer Engineering, Waterloo, ON, N2L 3G1, Canada}
\email{stephen.smith@uwaterloo.ca}

\begin{abstract}
Submodular set functions, characterized by the diminishing-returns property, provide a unifying combinatorial framework for many subset-selection problems in decision and control. Although exact maximization is NP-hard in general, the structural properties of submodular functions enable simple greedy algorithms that achieve constant-factor approximation guarantees for monotone objectives, with randomized greedy-based variants extending such guarantees to the non-monotone case. This survey reviews the theory, algorithms, and applications of submodular optimization with a focus on systems and control. We cover the structural properties of submodular functions, including curvature and the submodularity ratio, the constraint families that arise in practice (matroids, knapsack, and $p$-systems), and the main approximation algorithms for monotone and non-monotone submodular maximization, with up-to-date approximation ratios and hardness results. We then survey applications across sensor scheduling, multi-agent coordination, robust submodular optimization, leader-follower systems, distributed submodular optimization, game theory, system theory, resource allocation, social networks, and informative path planning. The survey emphasizes practically implementable greedy-based algorithms and instance-dependent refinements via curvature and the submodularity ratio. We close with observations on canonical control-theoretic objectives: certain functionals are submodular (the log-determinant and rank of the controllability Gramian, and the log-determinant of the Kalman filter information matrix), whereas closely related objectives fail to be sub- or supermodular (the steady-state Kalman filter error covariance, and the average control energy obtained from the inverse Gramian). We also highlight the cross-cutting open directions that follow.
\end{abstract}

\keywords{Submodular optimization, Greedy algorithms, Approximation algorithms, Matroids, Control systems}

\maketitle


\section{Introduction}
Many problems in engineering, machine learning, and operations research require selecting a subset of resources, sensors, actuators, or decisions to optimize some system-level objective. When the number of candidate elements is large, exhaustive search is infeasible, and the combinatorial structure of the problem must be exploited to design efficient algorithms.

The main focus of this article is on combinatorial optimization problems of the form
    \begin{equation} \label{eqn:set-fn-opt}
        \begin{aligned}
            & \underset{S}{\mathrm{maximize}}
            & & f(S) \\
            & \text{subject to}
            & & S \in \I \subseteq 2^\X.
        \end{aligned}
    \end{equation}
Here, for a finite ground set $\X$, $f: 2^{\X} \to \real$ is the objective (a set function) that assigns values to subsets of $\X$ while $\I$ is a family of subsets of $\X$ representing the constraint of the problem. A set $S \subseteq \X$ is \emph{feasible} for \eqref{eqn:set-fn-opt} if $S \in \I$ and \emph{optimal} if $S$ is feasible, and $f(S) \geq f(T)$ for all $T \in \I$.
The number of feasible sets $|\I|$ is typically huge, as it grows exponentially in the size of the ground set, and in this sense, evaluating every feasible set in search of the optimal solution is not a viable option. Whether an efficient method to find the optimal solution, even approximately, can be developed depends on the structure of the objective function $f$ and the constraint set $\I$.

This survey studies optimization problems of the form in~\eqref{eqn:set-fn-opt} where $f$ is a \emph{submodular} function and $\mathcal{I}$ can represent a variety of combinatorial structures. Not only does this structure allow us to aim for efficient approximate algorithms, submodular optimization applies to many practical settings, in particular in \emph{decision and control}, a main focus of this work.
The central reason submodular functions are tractable is the \emph{diminishing returns} property: the marginal benefit of adding any element to a set decreases as the set grows larger. It turns out that this structure is enough to guarantee that simple greedy algorithms, adding one element at a time, achieve provably near-optimal solutions, often within a constant factor of the global optimum.

\emph{Scope and organization}: We focus primarily on maximization problems. For submodular minimization, we refer the reader to~\citet{bach2019submodular}, although we occasionally cite minimization-based results in the applications of Section~\ref{sec:applications} where they bear directly on problems in decision and control. Related surveys cover submodular optimization in machine learning~\citep{bilmes2022submodularity}, signal processing~\citep{tohidi2020submodularity}, the general theory of submodular maximization~\citep{AK-DG:12}, and, most closely related to ours, submodular maximization under uniform and partition matroids with distributed solutions for control and robotics~\citep{kia2025submodular}, together with the earlier monograph of~\citet{clark2016submodularity} on submodularity in the dynamics and control of networked systems; the present work is distinguished by its broader treatment of the constraint families and algorithms, its focus on \emph{systems and control} applications, its emphasis on practically implementable algorithms, and its inclusion of the most recently known approximation ratios and hardness results, which postdate that monograph. We also note that submodularity and matroid theory have had profound impact well beyond the algorithmic questions considered here. A striking recent example from pure mathematics is the development of combinatorial Hodge theory for matroids~\citep{adiprasito2018hodge}, among the contributions that led to the 2022 Fields Medal.

One of our aims is to provide a self-contained introduction to the main topics, enabling readers new to the area to quickly build the background needed to engage with current research. The survey is organized as follows: Section~\ref{sec:submod-functions} reviews the theory of submodular functions, including key examples, properties, curvature, and continuous extensions. Section~\ref{sec:constraints} discusses the main constraint families (matroids, knapsack, and $p$-systems). Section~\ref{sec:algorithms} presents the main approximation algorithms. Section~\ref{sec:applications} presents applications in decision and control and, when appropriate, highlights open directions.

\section{Submodular functions}\label{sec:submod-functions}
We begin with the definition of a submodular function.
\begin{definition}[Submodular function]\label{def:submod1}
Let $\X$ be a finite set.  Then a set function $f: 2^{\X} \to \real$ is \emph{submodular} if every pair of sets $S,T\subseteq \X$ satisfies
    \begin{equation}
        \label{ineq:submodularity}
        f(S) + f(T) \geq f(S\cup T) + f(S\cap T).
        \end{equation}
    \end{definition}
While this definition may seem abstract, it can be visualized intuitively in the following example.

\begin{example}[Example of submodularity]
{\em
Consider two overlapping sets in the plane as in Figure~\ref{fig:S_and_T}. 
\begin{figure}[hbt]
    \centering
    \includegraphics[width=0.3\linewidth]{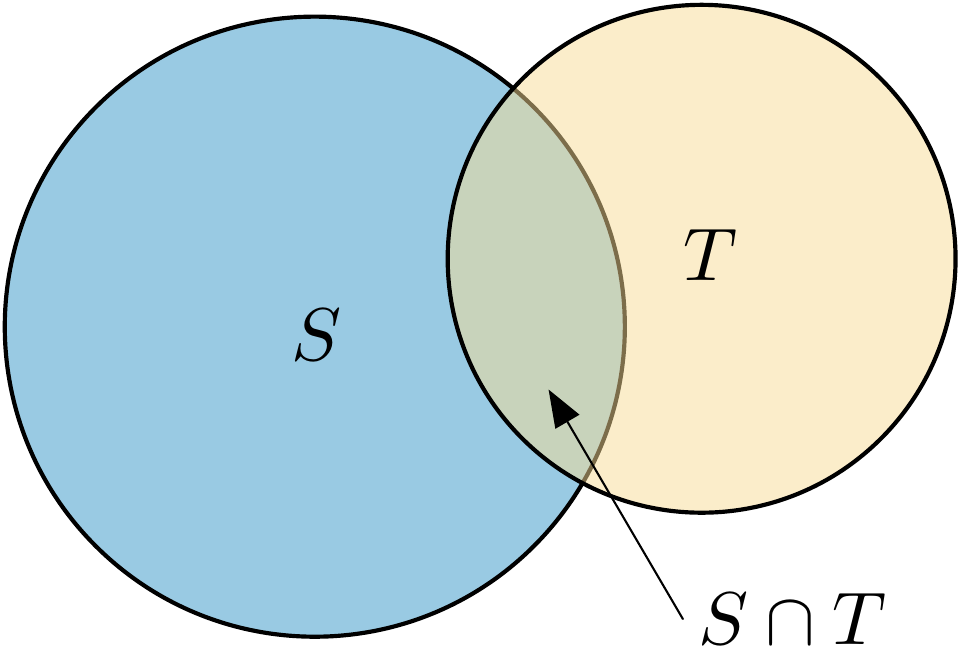}
    \caption{The area covered by a collection of (overlapping) sets is a submodular function.}
    \label{fig:S_and_T}
\end{figure}
Let $f$ be the set function that takes as input a collection of sets and outputs the area covered by the collection.  Here~\eqref{ineq:submodularity} asks that the sum of the covered areas $f(S) + f(T)$ be at least the area of their union plus that of their intersection, $f(S\cup T) + f(S\cap T)$.  For two regions in the plane this in fact holds with \emph{equality}---the \emph{inclusion-exclusion principle}, $f(S)+f(T)=f(S\cup T)+f(S\cap T)$---so the area measure is \emph{modular}. The function becomes strictly submodular once $f$ is the \emph{set-cover} function whose ground set is a collection of (overlapping) sets and whose value is the area of their union: each added set then contributes diminishing new coverage, making~\eqref{ineq:submodularity} a strict inequality whenever the sets overlap (we formalize this in Example~\ref{ex:set_cover_function}).  \oprocend
 }
\end{example}

We say that a set function $ f $ is \emph{supermodular} if $ -f$ is submodular, and \emph{modular} if it is both supermodular and submodular. In particular, for a given set $S\subseteq\X$, modular functions are simply represented by
\[
f(S)=f(\emptyset)+\sum_{s\in S} \big(f(s)-f(\emptyset)\big).
\]

We define the \emph{marginal gain} of adding the elements in $B\subseteq \X$ to a subset $A\subseteq \X$ as 
\[
\Delta(B|A)=f(A \cup B) - f(A).
\]
In the case where $B=\{x\}$ is a singleton, we write $\Delta(x|A)$ rather than $\Delta(\{x\}|A)$; analogously, we abbreviate $f(\{x\})$ as $f(x)$ for the value of $f$ on a singleton set.  The following is an equivalent characterization of submodularity, which will be used frequently. 
\begin{proposition}[An equivalent characterization of submodularity]\label{prop:equi-sub-char}
Let $ \X $ be a finite set. Then a set function $f: 2^{\X} \to \real$ is submodular if and only if every pair of sets $ S,T \subseteq \X $ with $ S\subseteq T$ and every $ x \in \X \setminus T $ satisfies
\[
f(S\cup \{x\}) - f(S) \geq f(T\cup \{x\}) - f(T), 
\]
which can be restated in terms of marginal gains as
\[
\Delta(x|S) \geq \Delta(x|T) 
\]
for all $S \subseteq T \subseteq \X$ and $x \in \X \setminus T$. 
\end{proposition}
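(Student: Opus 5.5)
We prove the two implications separately; the forward direction is a single substitution, while the reverse direction needs a telescoping argument.

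\emph{Submodularity implies diminishing returns.} Let $S\subseteq T$ and $x\in\X\setminus T$. Apply~\eqref{ineq:submodularity} to the pair $S\cup\{x\}$ and $T$. Since $S\subseteq T$, their union is $T\cup\{x\}$. Since $x\notin T$, their intersection is $S$. The inequality therefore reads $f(S\cup\{x\})+f(T)\geq f(T\cup\{x\})+f(S)$, and rearranging gives $\Delta(x|S)\geq\Delta(x|T)$.

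\emph{Diminishing returns implies submodularity.} Take arbitrary $S,T\subseteq\X$. The target inequality~\eqref{ineq:submodularity} is equivalent to
\[
f(S\cup T)-f(S)\leq f(T)-f(S\cap T).
\]
Enumerate $T\setminus S=\{x_1,\dots,x_k\}$; if $k=0$ then $T\subseteq S$ and both sides vanish. Otherwise set $S_i=S\cup\{x_1,\dots,x_i\}$ and $R_i=(S\cap T)\cup\{x_1,\dots,x_i\}$ for $i=0,\dots,k$. Then $S_0=S$, $S_k=S\cup T$, $R_0=S\cap T$ and $R_k=T$. Telescoping gives
\[
f(S\cup T)-f(S)=\sum_{i=1}^k \Delta(x_i|S_{i-1}), \qquad f(T)-f(S\cap T)=\sum_{i=1}^k \Delta(x_i|R_{i-1}).
\]
For each $i$ we have $R_{i-1}\subseteq S_{i-1}$, and $x_i\notin S_{i-1}$ because $x_i\notin S$ and the $x_j$ are distinct. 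The hypothesis, applied with the smaller set $R_{i-1}$ and the larger set $S_{i-1}$, gives $\Delta(x_i|R_{i-1})\geq\Delta(x_i|S_{i-1})$. Summing over $i$ yields the desired inequality.

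The only real obstacle is choosing the two chains in the reverse direction. They must add the same elements $x_i$ one at a time, keep the chain built from $S\cap T$ nested inside the chain built from $S$ at every step, and ensure each added element lies outside the larger set so that the hypothesis applies. Enumerating $T\setminus S$ in a fixed order handles all three requirements; the remaining steps are routine set bookkeeping.
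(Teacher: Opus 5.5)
Your proof is correct and follows the paper's argument. The forward direction applies \eqref{ineq:submodularity} to the pair $T$ and $S\cup\{x\}$ exactly as the paper does, and the reverse direction is the same two-chain telescoping argument; the only difference is that you enumerate $T\setminus S$ where the paper enumerates $S\setminus T$, which just swaps the roles of $S$ and $T$ and is harmless because \eqref{ineq:submodularity} is symmetric in them.
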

\begin{proof}[Proof sketch]
Suppose that $f$ is submodular in the sense of~\eqref{ineq:submodularity}. Fix $S\subseteq T$ and $x\in \X\setminus T$. Applying~\eqref{ineq:submodularity} to the pair $T$ and $S\cup\{x\}$, note that $T\cup(S\cup\{x\})=T\cup\{x\}$ (since $S\subseteq T$) and $T\cap(S\cup\{x\})=S$ (since $x\notin T$). Therefore,~\eqref{ineq:submodularity} gives
\[
f(T)+f(S\cup\{x\}) \ge f(T\cup\{x\})+f(S),
\]
which rearranges to $f(S\cup\{x\})-f(S)\ge f(T\cup\{x\})-f(T)$.

Conversely, suppose the diminishing-returns property holds. For arbitrary $S,T\subseteq \X$, enumerate $S\setminus T=\{x_1,\ldots,x_r\}$ and define $A_i=(S\cap T)\cup\{x_1,\ldots,x_i\}$ and $B_i=T\cup\{x_1,\ldots,x_i\}$, so that $A_0=S\cap T$, $A_r=S$, $B_0=T$, and $B_r=S\cup T$. Since $A_{i-1}\subseteq B_{i-1}$ and $x_i\notin B_{i-1}$, diminishing returns gives 
\[
f(A_i)-f(A_{i-1})\ge f(B_i)-f(B_{i-1}). 
\]
Summing over $i=1,\ldots,r$ telescopes to $f(S)-f(S\cap T)\ge f(S\cup T)-f(T)$, which rearranges to~\eqref{ineq:submodularity}.
\end{proof}
This property captures the intuition that the marginal gain of adding an element decreases as the set grows larger.  As an example, consider the set of elements $\X$ to represent sensors (discs), and the submodular function $f$ to measure the area covered by the union of selected discs.  We can see in  Figure~\ref{fig:diminishing_returns} that the additional area covered by sensor $x$ when added to $A$ is more than when added to $B \supset A$.  
\begin{figure}
    \centering
    \includegraphics[width=0.38\linewidth]{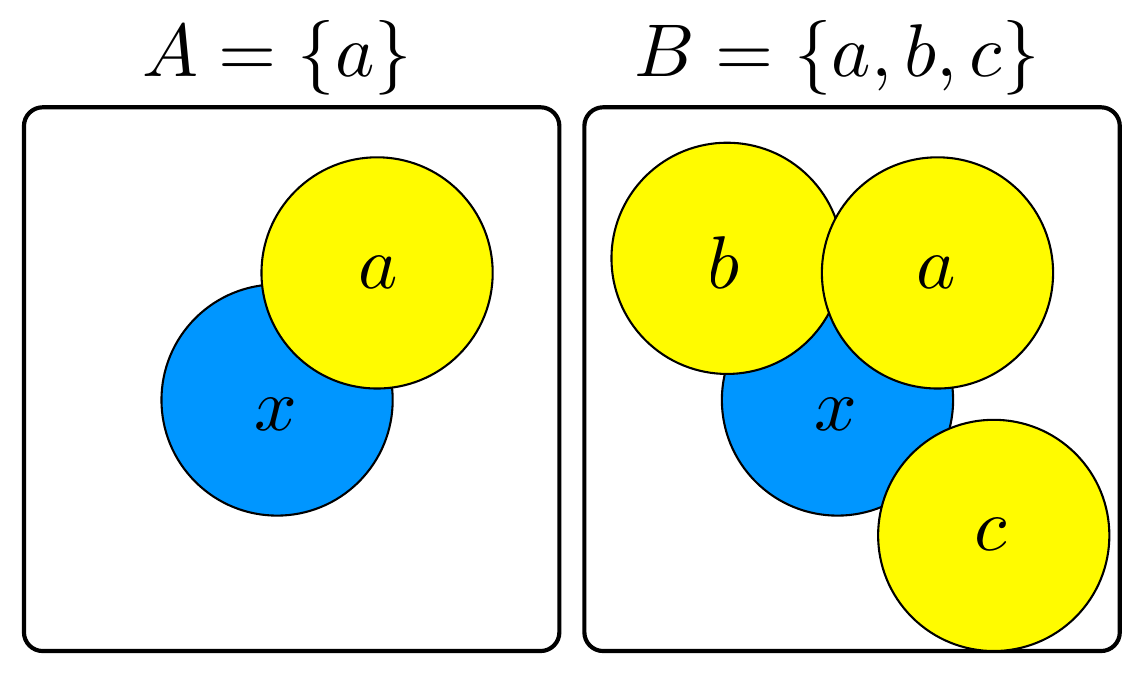}
    \caption{An example submodular function measuring the area covered by a set of discs, illustrating the notion of diminishing returns.}
    \label{fig:diminishing_returns}
\end{figure}

\subsection{Some key examples}

Beyond the set cover function already introduced, three further examples appear repeatedly in submodular optimization and in the applications discussed in this survey.
\begin{example}[Facility location]
{\em Let $\mathcal{U}$ be a set of clients and $\X$ a set of potential facility locations, with $w_{ij}\geq 0$ denoting the benefit of assigning client $j\in \mathcal{U}$ to facility $i \in \X$. The \emph{facility location} function
\[
    f(A) = \sum_{j\in\mathcal{U}} \max_{i\in A}\, w_{ij}, \qquad A\subseteq\X,
\]
measures the total benefit when each client is served by its best open facility (with the convention that $\max_{i\in\emptyset}(\cdot)=0$, i.e., $f(\emptyset)=0$). Submodularity follows from the diminishing-returns property of the max: for $A\subseteq B$ and $x\notin B$, client $j$'s gain satisfies
\[
\max(w_{xj}-\max_{i\in B}w_{ij},0)\leq \max(w_{xj}-\max_{i\in A}w_{ij},0),
\]
since $\max_{i\in B}w_{ij}\geq\max_{i\in A}w_{ij}$. Adding more facilities can only maintain or improve service, and $f(\emptyset)=0$. Facility location is widely used in clustering, recommendation, and document summarization.}\oprocend
\end{example}

\begin{example}[Entropy and mutual information]
{\em Let $\mathbf{X}=(X_1,\ldots,X_n)$ be a collection of jointly Gaussian random variables with covariance matrix $\Sigma$.
For any index set $S\subseteq\{1,\ldots,n\}$, let $\mathbf{X}_S:=(X_i)_{i\in S}$ denote the random vector obtained by stacking the coordinates of $\mathbf{X}$ with indices in $S$ (in increasing order). For a singleton $\{x\}$ we write $X_x:=X_{\{x\}}$.
The \emph{differential entropy} of $\mathbf{X}_A$ is
\[
    H(A)=\tfrac12\log\det(\Sigma_{AA})+c_{|A|},
\]
where $\Sigma_{AA}$ is the principal submatrix of $\Sigma$ indexed by $A$, and $c_{|A|}$ depends only on $|A|$ (in particular, not on $\Sigma$).
For $A\subseteq B$ and $x\notin B$, let the conditional entropy be denoted
\[
H(X_x\mid \mathbf{X}_A)=H(A\cup\{x\})-H(A).
\]
For random vectors $U,V,W$ (of arbitrary dimensions), define the conditional mutual information by
\[
I(U;V\mid W):=H(U\mid W)-H(U\mid V,W),
\]
where $H(U\mid V,W)$ means conditional differential entropy given the joint random vector $(V,W)$ (equivalently, the stacked vector $[V^\top\,W^\top]^\top$).
This function is submodular because
\[
    H(X_x\mid \mathbf{X}_A) - H(X_x\mid \mathbf{X}_B)
    = I \bigl(X_x\,;\,\mathbf{X}_{B\setminus A}\mid \mathbf{X}_A\bigr)\ge 0,
\]
i.e., conditioning on more observations can only reduce (or maintain) conditional entropy, so marginal entropy gains are diminishing. We have $H(\emptyset)=0$ (up to the constant). Note that, unlike its discrete counterpart, this differential entropy is submodular but not in general monotone: the marginal gain $H(X_x\mid \mathbf{X}_A)$ is a conditional differential entropy, which can be negative when $X_x$ is nearly determined by $\mathbf{X}_A$. This example arises directly in sensor placement~\citep{JMLR:v9:krause08a}: since the total entropy 
\[
H(\{1,\ldots,n\})=H(A)+H(\{1,\ldots,n\}\setminus A\mid \mathbf{X}_A)
\]
is fixed, selecting $k$ indices $A$ to maximize $H(A)$ is equivalent to minimizing the residual uncertainty $H(\{1,\ldots,n\}\setminus A\mid \mathbf{X}_A)$ in the unselected variables.
\oprocend}
\end{example}
\begin{example}[Actuator selection in a network system]
{\em Consider a network of $n$ subsystems described by the discrete-time linear dynamics 
\[
x_{t+1} = A x_t + \sum_{i \in S} b_i u_i(t),
\]
where $A \in \real^{n \times n}$ captures the network coupling, and each column $b_i \in \real^n$ represents the influence of actuator $i$ on the state. For a horizon $T$, the \emph{controllability Gramian} of the selected actuator set $S \subseteq \X$ is
\[
    W(S) = \sum_{t=0}^{T-1} A^t B_S B_S^\top (A^\top)^t,
\]
where $B_S = [b_i]_{i \in S}$ is the matrix whose columns are the $ b_i $, $ i \in S $. It is well-known that the spectral properties of the controllability Gramian measure the required control energy~\citep{pasqualetti2014controllability}. For instance, a natural measure of how well the selected actuators can steer the system is
\[
f(S)=\tfrac12\log\det(I+W(S)),
\]
which is, up to an additive constant, the differential entropy of a Gaussian reachable-state model with covariance $I+W(S)$, and hence serves as a scalar proxy for overall controllability. Interestingly, this function is submodular: for $S \subseteq T$ and $x \notin T$, the additional log-determinant gain from adding actuator $x$ to the larger set $T$ is no greater than adding it to $S$, because the directions already covered by $T \supseteq S$ leave less residual gain for $x$ to contribute.}\oprocend
\end{example}

\subsection{Properties}

We start by stating some elementary set function properties that are commonly assumed for submodular functions. 
Let $ \X $ be a finite set, and let $f: 2^{\X} \rightarrow \real$  be a set function. 
\begin{definition}[Properties of set functions]
    A set function $f:2^{\X}\rightarrow \real$ is 
\begin{enumerate}
    \item \emph{non-negative} if for any subset $A$ of $\X$, $f(A) \geq 0$. This property ensures that the value of a subset is always non-negative;

    \item \emph{monotone} if $f(A) \leq f(B)$ whenever $A \subseteq B$, for all $A, B \subseteq \X$. In other words, adding elements to a set cannot decrease its value.

    \item \emph{normalized} if $f(\emptyset) =0$.  That is, the empty set has a value of zero.

    \item \emph{symmetric} if for every set $A\subseteq \X$, we have $f(A) = f(\X\setminus A)$.  This property is typically imposed on non-monotone functions; for symmetric submodular functions in particular (such as the graph cut function or $|A|\,|\X\setminus A|$), the function values tend to be largest for sets containing roughly half the elements and smallest at the extremes (the empty and full sets).
\end{enumerate}
\end{definition}
When a set function is monotone, normalized, and submodular, it is sometimes called a \emph{polymatroid set function}~\citep{edmonds2003submodular}; this class of functions plays a central role in the analysis of greedy algorithms (see~\citet{GLN-LAW-MLF:78-I}, and also Section~\ref{sec:algorithms}). The following example verifies that the set cover function satisfies all three properties.

\begin{example}[Properties of the set cover function]
\label{ex:set_cover_function}
{\em Consider a set of $n$ elements $\mathcal{U} = \{e_1,\ldots,e_n\}$ and a collection of $m$ sets $\{S_1,\ldots,S_m\}$, where each $S_i \subseteq \mathcal{U}$.  Let $\mathcal{X} =  \{1,\ldots,m\}$ be the indices of the sets, and let $f:2^{\mathcal{X}} \to \real_{\geq 0}$ be the set cover function, which for each $A\in 2^{\mathcal{X}}$ returns the number of elements in $\mathcal{U}$ covered by $A$:
\[
f(A) = \left|\bigcup_{i\in A} S_i\right|.
\]
Note that $A\subseteq \mathcal{X}$ is a set of indices of sets from $\{S_1,\ldots,S_m\}$.

The function $f$ is normalized since $f(\emptyset) = |\emptyset| = 0$.  The function is monotone since for any $A\subset B \subseteq \mathcal{X}$ we have
\[
f(B) = f(A) + \Delta(B \mid A),
\]
and 
\[
\Delta(B|A) = \left|\big(\cup_{i\in B}S_i\big) \setminus \big( \cup_{i\in A}S_i \big) \right| \geq 0.
\]
Finally, to show submodularity, take any $A\subseteq B \subset \mathcal{X}$ and $x\notin B$.  We need to establish that $\Delta(x|A) \geq \Delta(x|B)$.  We have
\begin{align*}
\Delta(x|B) &= \left| S_x \setminus \cup_{i\in B}S_i \right|, \\
    &= \left| S_x \setminus \left( \left(\cup_{i\in A}S_i\right) \cup \left(\cup_{i\in B\setminus A}S_i\right) \right) \right| \\
    &= \left| \left( S_x\setminus \cup_{i\in A}S_i \right) \cap \left( S_x\setminus \cup_{i\in B\setminus A}S_i \right) \right| \\
    & \leq \left|  S_x\setminus \cup_{i\in A}S_i \right| = \Delta(x|A).
\end{align*}
where the equality in the third line is an application of De Morgan's laws: $A \setminus (B \cup C) = (A \setminus B) \cap (A\setminus C)$.  Thus, the set cover function is normalized, monotone and submodular. Note that normalized and monotone implies non-negativity.  The function is clearly not symmetric as $f(\emptyset) = 0 \neq f(\X)$ unless $\X$ is the empty set.\oprocend
}
\end{example}

We next define the notion of curvature, an instrumental tool for quantifying how far a monotone submodular function deviates from modularity, which in turn yields refined performance guarantees for greedy and related approximation algorithms.
\begin{definition}[Curvature]\label{def:curvature}
The \emph{curvature} of a normalized, monotone submodular function $f: 2^{\X} \rightarrow \real_{\geq 0}$ is defined as
\begin{equation}\label{eq:curvature}
    c = 1-\min_{\substack{A\subseteq \X,\, x\in \X\setminus A \\ f(x)>0}}\frac{\Delta(x|A)}{f(x)}.
\end{equation}
\end{definition}
By diminishing returns, the minimum in~\eqref{eq:curvature} is attained at $A=\X\setminus\{x\}$, recovering the standard form $c=1-\min_{x:\,f(x)>0}\,\Delta(x\mid \X\setminus\{x\})/f(x)$ of~\citet{CONFORTI1984251}. Whenever $c = 0$, the function is modular; in this sense, the notion of curvature measures how far $f$ is from being modular. Curvature is an instance-dependent structural parameter, and as we will see in Section~\ref{sec:algorithms} it sharpens the worst-case guarantees of the greedy algorithm.

A complementary parameter applies to set functions that are not exactly submodular. The \emph{submodularity ratio} of a non-negative monotone set function $f:2^\X \to \real_{\geq 0}$ is
\[
\gamma \;=\; \min_{\substack{S,\, T \subseteq \X,\; S \cap T = \emptyset \\ \Delta(T\mid S)>0}}\;
\frac{\sum_{x \in T} \Delta(x\mid S)}{\Delta(T\mid S)},
\]
with $\gamma \in [0,1]$, and $\gamma = 1$ if and only if $f$ is submodular~\citep{das2011submodular}. A function with $\gamma$ bounded away from zero is said to be \emph{weakly submodular}. Like curvature, $\gamma$ refines the greedy guarantee (see Section~\ref{sec:algorithms}), and it is the standard tool used in this survey when an objective in decision and control is not exactly submodular. Several other instance-dependent variants tailored to specific applications, such as extended greedy curvature~\citep{welikala2022new} and supermodularity of conditioning~\citep{corah2018distributed}, appear in Section~\ref{sec:applications}, where they yield application-specific refinements.

\subsection{Operations that preserve submodularity}\label{sec:submod-operations}

A key practical reason submodularity is easy to work with is that the class of submodular functions is closed under several natural operations. This means complex objectives encountered in applications can often be verified as submodular by decomposing them into simpler pieces. We outline some of these next. 
\begin{enumerate}
        \item \emph{The weighted sum of submodular functions is submodular.}
             That is, given submodular functions $f_i:2^{\X}\rightarrow \real$, for $i\in\{1,\ldots,n\}$ and scalars $\alpha_1,\alpha_2,\ldots,\alpha_n \geq 0$, the function $g:2^{\X}\rightarrow \real$ defined as $g(S) = \sum_{i=1}^n\alpha_i f_i(S)$ for each $S\subseteq \X$ is submodular. This is useful whenever one wants to optimize a multi-objective tradeoff: for example, a weighted combination of coverage and sensing quality is submodular if each term individually is.
            
        \item \emph{The composition of a non-decreasing concave function and a monotone submodular function is submodular.}  That is, given a non-decreasing concave function $\varphi:\real \to \real$ and a monotone submodular function $f:2^{\X}\rightarrow \real$, the function $(\varphi\circ f):2^{\X}\rightarrow \real$ defined as $(\varphi\circ f)(S) = \varphi\big(f(S)\big)$ for each $S\subseteq \X$ is submodular. A canonical instance is the set-cover function of Example~\ref{ex:set_cover_function}, which is monotone submodular: composing it with a non-decreasing concave function (such as $\sqrt{\cdot}$ or $\min(\cdot,k)$) yields a submodular objective that captures the diminishing utility of additional coverage. The log-determinant entropy $H$ of the previous section is also submodular, but by the conditional-entropy argument given there (conditioning cannot increase entropy) rather than by this composition rule: $\det(\Sigma_{AA})$ is not itself a monotone submodular set function, so the rule does not apply to it.
        
        \item \emph{The residual of $f$ given a set $B$ is submodular.}  That is, given a submodular function $f:2^{\X}\rightarrow \real$ and a set $B\subset \X$, the function $g:2^{\X\setminus B}\rightarrow \real$ defined as \[
        g(S) = f(S \cup B) - f(B)
        \]
        for each $S \subseteq \X\setminus B$ is submodular. Practically, this means that after any partial solution $B$ is fixed, e.g., after several greedy steps, the remaining marginal value is still a submodular function over the remaining elements, a property that underpins the analysis of greedy algorithms.
    \end{enumerate}

\subsection{Continuous extensions}
Submodular functions admit several natural ``extensions'' to continuous domains, which are useful both for analysis and for designing algorithms. We briefly describe the two most common; readers primarily interested in the algorithmic results may skip this subsection.

\subsubsection*{Multi-linear extension} 
The multilinear extension $F: [0,1]^n \to \mathbb{R} $ of a submodular function $f:2^\X\to\real$, where we identify $\X=\{1,\ldots,n\}$ so that $n=|\X|$, is defined as:
\[
F(\mathbf{x}) = \sum_{S \subseteq \X} f(S) \prod_{i \in S} x_i \prod_{j \in \X \setminus S} (1 - x_j),
\]
where $ \mathbf{x} \in [0,1]^n $. There is another way to make sense of this function, by observing that 
\[
F(\mathbf{x}) = \mathbb{E}[f(S)],
\]
where the expectation is taken over a random set $S \subseteq \X$ such that each element $i \in \X$ is included in $S$ independently with probability $x_i$, the $i$th entry of the vector $\mathbf{x}$.
The multilinear extension provides us with a differentiable function on a continuous domain, whose gradient can be estimated efficiently by sampling. It underlies the continuous greedy algorithm of~\citet{calinescu2011maximizing}, which achieves the optimal $(1-1/e)$ approximation ratio for monotone submodular maximization subject to a matroid constraint (Section~\ref{sec:algorithms}), though at substantially higher computational cost than the algorithms discussed in this survey.


\subsubsection*{Lov\'{a}sz extension}

A submodular function can be extended to a convex function, commonly referred to as the \emph{Lov\'{a}sz extension}, over a polyhedron. To wit, consider a submodular function $ \map{f}{2^\X}{\real} $, where $ \X=\{1,2,\cdots, n\} $. Given a set $ A \subseteq \X $, and a vector $ v \in \real^n $, we define 
\[
v(A):=\sum_{i\in A} v_i.
\]
We now define the submodular polyhedron
$ P_f $ associated to $ f $ as
\[
P_f=\{v\in \real^n \ | \ v(A) \leq f(A) \quad \mathrm{for} \ \mathrm{all} \ A \subseteq \X \}.
\]
The  \emph{Lov\'{a}sz extension} of $ f $ is a function $ \map{\fL}{[0,1]^n}{\real} $ given by
\[
\fL(w)=\max_{v\in P_f} w^\top v.
\]
In spite of the fact that the constraint set for this linear program can be complicated, its solution can easily be represented in closed form. Given $ w \in [0,1]^n $, consider a permutation $\sigma$ such that $w_{\sigma(1)} \geq w_{\sigma(2)} \geq w_{\sigma(3)} \geq \cdots \geq w_{\sigma(n)}$.
Let us define $v^* \in \mathbb{R}^n$ using its components, indexed by $ \sigma(i) $, where $ i \in\{1,\cdots, n\} $ as
   \[
   v^*_{\sigma(i)} = f(A_{\sigma(i)}) - f(A_{\sigma(i-1)}),
   \]
   where $A_{\sigma(0)}:=\emptyset$ and
   \[
   A_{\sigma(i)}=\{\sigma(1),\cdots, \sigma(i)\} \quad \text{for } i\geq 1.
   \]
Note that by construction, $v^* \in P_f$: for any subset $A \subseteq \X$, the constraint $v^*(A) \leq f(A)$ follows from the submodularity of $f$ (see, e.g., \citet{edmonds2003submodular,bach2013learning,SF:05}). Moreover,
   \[
   \fL(w) = \sum_{i=1}^n w_{\sigma(i)}\, v^*_{\sigma(i)} = \sum_{i=1}^n w_{\sigma(i)} \bigl(f(A_{\sigma(i)}) - f(A_{\sigma(i-1)})\bigr),
   \]
and a standard exchange argument on the chain of tight constraints $v^*(A_{\sigma(i)}) = f(A_{\sigma(i)})$ shows that $v^*$ maximizes $w^\top v$ over $v\in P_f$ (the so-called \emph{greedy algorithm for the submodular polyhedron}; see \citet{edmonds2003submodular}). Hence
\begin{align*}
    \fL(w) &= \max \left\{ \sum_{i=1}^n w_i v_i : v \in P_f \right\} \\
    &= \sum_{i=1}^n w_{\sigma(i)} \left( f(A_{\sigma(i)}) - f(A_{\sigma(i-1)}) \right),
\end{align*}
where $ \sigma $ is a permutation of the indices of $ w $ such that the components of $ w $ are sorted in non-increasing order.

The key property of this extension is established in the seminal paper of Lov\'{a}sz, which states that minimizing a submodular function is equivalent to minimizing its Lov\'{a}sz extension. 
\begin{theorem}[Lov\'{a}sz~\citep{LL:83}]\label{thm:lovasz}
Let $ f $ be a submodular function defined on $ \X=\{1\ldots, n\} $ such that $ f(\emptyset)=0 $. Then
\begin{enumerate}
\item $ \map{\fL}{[0,1]^n}{\real} $ is a convex function;
\item $\min\{f(A) \mid A\subseteq \X \}=\min\{\fL(w) \mid w\in [0,1]^n \}$, and the continuous minimization can be solved in polynomial time\footnote{For instance, one can use the ellipsoid method; \citet{schrijver2000combinatorial} later gave a strongly-polynomial combinatorial algorithm for submodular function minimization.}.
\end{enumerate}
\end{theorem}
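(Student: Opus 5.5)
Convexity in part (1) follows from the variational definition $\fL(w)=\max_{v\in P_f} w^\top v$. For each fixed $v\in P_f$, the map $w\mapsto w^\top v$ is linear. A pointwise supremum of linear functions is convex.

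I will make sure the maximum is finite and attained on $[0,1]^n$. This comes from the greedy closed form stated just before the theorem: the vector $v^*$ built from a sorting permutation $\sigma$ of $w$ lies in $P_f$ and achieves the maximum. Since $[0,1]^n$ is convex, $\fL$ is a convex function on it, which proves (1).

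For part (2), I first rewrite the closed form as a combination of indicator vectors. Set $w_{\sigma(n+1)}:=0$ and apply Abel summation, using $f(\emptyset)=0$:
\[
\fL(w)=\sum_{i=1}^{n-1}\bigl(w_{\sigma(i)}-w_{\sigma(i+1)}\bigr) f(A_{\sigma(i)}) + w_{\sigma(n)} f(A_{\sigma(n)}).
\]
Write $\lambda_i:=w_{\sigma(i)}-w_{\sigma(i+1)}\ge 0$ for $i<n$ and $\lambda_n:=w_{\sigma(n)}\ge 0$. Then $\sum_i\lambda_i=w_{\sigma(1)}\le 1$. Put $\lambda_0:=1-w_{\sigma(1)}\ge 0$ and attach it to the empty set, which has $f(\emptyset)=0$.

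This shows that $\fL(w)$ is a convex combination of the values $f(\emptyset),f(A_{\sigma(1)}),\dots,f(A_{\sigma(n)})$. Hence $\fL(w)\ge \min_{A} f(A)$. The same expansion applied to an indicator vector $w=\mathbf{1}_A$ gives $\fL(\mathbf{1}_A)=f(A)$, because only one coefficient is nonzero. So the minimum of $\fL$ over $[0,1]^n$ is at most $\min_A f(A)$, and the two minima are equal.

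The proof also gives a rounding procedure. Given any minimizer $w$, one of the sets $A_{\sigma(i)}$ must itself attain the minimum. These are level sets of $w$, and there are only $n+1$ of them to check.

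For the polynomial-time claim, I use that $\fL$ is convex and can be evaluated exactly with $O(n\log n)$ sorting plus $n$ oracle calls. The same greedy vector $v^*$ is a subgradient of $\fL$ at $w$, since $\fL(w')\ge w'^\top v^*$ for all $w'$ and equality holds at $w$. With exact evaluation and a subgradient oracle over the box, the ellipsoid method minimizes $\fL$ to any accuracy $\epsilon$ in time polynomial in $n$ and $\log(1/\epsilon)$.

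The main obstacle is turning approximate continuous minimization into an exact discrete minimum. I would handle it as follows. When $f$ takes integer values, or rational values with bounded encoding, consecutive distinct values of $f$ are separated by a known gap. Choose $\epsilon$ below that gap. Then rounding the approximate minimizer through its level sets, as above, returns an exact minimizer of $f$.

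Without that encoding assumption, I would rely on the strongly polynomial combinatorial algorithm of \citet{schrijver2000combinatorial}, which the footnote already cites, rather than prove that result here.
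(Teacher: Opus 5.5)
Your proof is correct. The paper gives no proof of this theorem. It defines $\fL$ as the support function of $P_f$, asserts with references that the greedy vector $v^*$ lies in $P_f$ and attains the maximum, and then attributes the result to Lov\'asz, deferring details to Bach.

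Your argument is the standard one built from exactly those two background facts, and it puts each hypothesis in the right place:
\begin{itemize}
\item Convexity in part (1) needs only the support-function form. In fact $f(\emptyset)=0$ alone makes $P_f$ nonempty, and the singleton constraints $v_i\le f(\{i\})$ make the maximum finite for $w\ge 0$, so submodularity is not needed there.
\item Submodularity enters part (2) through the greedy formula. This gives your convex-combination expansion $\fL(w)=\sum_{i=0}^{n}\lambda_i f(A_{\sigma(i)})$, hence $\fL(\mathbf{1}_A)=f(A)$. The box is what makes $\lambda_0=1-w_{\sigma(1)}\ge 0$.
\item Your level-set rounding of a minimizer follows immediately.
\end{itemize}

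The one imprecision is the complexity claim. For absolute accuracy $\epsilon$, the number of ellipsoid iterations depends on $\log(1/\epsilon)$ and also on the range of $\fL$ over the box. That range, like the norms of the subgradients $v^*$, scales with $\max_A|f(A)|$. So for integer-valued $f$ bounded by $M$, your rounding argument as written gives a bound polynomial in $n$ and $\log M$ (weakly polynomial), not in $n$ and $\log(1/\epsilon)$ alone. Your fallback to Schrijver's combinatorial algorithm correctly covers the case with no encoding bound.
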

 We refer to~\citet{bach2013learning} for a detailed treatment of the submodular minimization problem and its applications.

\subsubsection*{DR-submodularity}\label{sec:DR-sub}
Continuous submodular functions~\citep{bach2019submodular,bian2017continuous} extend the discrete framework to real-valued domains and have found applications in systems and control~\citep{bunton2022joint}. We briefly define this extension next: Let 
$ \mathcal{X}=\Pi_{i=1}^n \mathcal{X}_i$, where each $ \mathcal{X}_i=[0,a_i]$, where $ i \in \{1,\ldots, n\}$, is an interval in $ \real $. We consider the partial order on $ \real^n $ where $ x\leq y $ if and only if $ x_i \leq y_i $. A function $ \map{f}{\mathcal{X}}{\real_{\geq 0}} $ is said to be \emph{submodular} if for all $x,y \in \mathcal{X}$ we have that
\[
f(x)+f(y)\geq f(x \wedge y )+ f(x \vee y ),
\]
where $x \wedge y = \min(x,y)$ (componentwise minimum) and $x \vee y = \max(x,y)$ (componentwise maximum). Moreover, $ f $ is said to be monotone, if it is monotone with respect to this partial order, i.e., $ x\geq y $ implies  $f(x) \geq f(y) $, for all $ x,y \in \mathcal{X}$. One can verify that whenever $ f $ is twice differentiable, it is submodular if and only if 
\[
\frac{\partial^2f(x)}{\partial x_i\partial x_j}\leq 0,
\]
for all $ x\in \mathcal{X} $ and all $ i \neq j $. It is worth pointing out that this condition is different from convexity and concavity, even though there are convex and concave functions that satisfy it.  A \emph{strict subclass} of continuous submodular functions~\citep{bian2017guaranteed} is called DR-submodular. While we do not define this precisely here, we point out that whenever $ f $ is twice differentiable, this condition additionally asks that
$\pdertwo{f(x)}{x_i}\leq 0
$ for all $ i \in \{1,\ldots, n\}$. 
In~\citet{hassani2017gradient}, it is shown that stochastic projected gradient methods can be used to approximate the solution of a submodular maximization problem within a $ \frac{1}{2}$-factor for maximizing any monotone continuous DR-submodular function over a general convex constraint set. Prior to this result, for instance in~\citet{bian2017guaranteed}, it is shown that under the down-closed convex constraint\footnote{A convex set $C\subseteq\mathcal{X}$ (with lower bound $x_0$) is \emph{down-closed} if $x\in C$ and $x_0\leq z \leq x$ imply $z\in C$.}, using a version of the Frank-Wolfe algorithm, the approximation factor of $ (1-\frac{1}{e}) $ can be obtained, and this approximation is tight.
At the time of writing this article, this area is cutting-edge with a lot of recent activities; for the current state of the art, we refer the reader to~\citet{mualem2024bridging} and references within. 

\subsection{Representation of submodular functions}

Since we are dealing with the optimization of submodular functions, this raises the question of how a submodular function is represented. One approach is to list values for all possible subsets of the ground set $\X$. However, this takes space that is exponential in the size of the ground set (since we need to enumerate all possible subsets). There are two ways around this challenge, which we discuss next.

        \subsubsection*{Value oracle}
        The value oracle model assumes the function $f$ is given as a black box. The algorithm can only query the value of $f(S)$ for any given subset S of the ground set. The hardness of a problem is then measured by the minimum number of oracle queries needed to find an approximate or exact solution. For example, a problem is considered hard if it requires an exponential number of queries. This model is useful for proving lower bounds that are independent of the specific representation of the function.
        
        \subsubsection*{Compact representation}
        When the function $f$ has a compact representation, it means the function is described by an explicitly given data structure or formula. For example, in the case of a submodular function, the function might be represented by a cut function in a graph, where the input is the graph itself. The size of this representation is typically polynomial in the size of the ground set. Hardness results in this model consider the running time of an algorithm as a function of the size of this representation. For example, problems are hard if they require exponential time even when the function is given in a compact form. This model is more powerful since the algorithm can use the structure of the representation to its advantage, rather than just querying values.

The choice of oracle model determines which complexity bounds apply. In Section~\ref{sec:algorithms}, all algorithm complexities are stated in terms of the number of value-oracle calls, which is the standard model for the approximation algorithms we discuss.

\section{Set system constraints}\label{sec:constraints}

    \subsection{Independence systems and matroids}
    Many practical combinatorial optimization settings involve constraints. Since submodular functions are defined on subsets of the set $\X$, such constraints can conveniently be thought of as restricting what sets the submodular function has access to. Given the characterization of a submodular function provided in Proposition~\ref{prop:equi-sub-char}, for instance, we need to place some restrictions on the type of constraints that we allow. This motivates the next definition. 
    
    \begin{definition}[Independence systems and matroids]\label{def:independent-set}
        Let $ \X $ be a finite set. We say that $ (\X, \I) $ is an \emph{independence system} if $\I \subseteq 2^{\X}$ satisfies:
        \begin{enumerate}
            \item  $\emptyset \in \I $;
            \item for every $ A\in \I$, we have that $ B \in \I $ for all $ B \subseteq A$ (\emph{downward-closed property}).
        \end{enumerate}
        We say that $ A $ is an \emph{independent set} if $ A\in \I$, and \emph{dependent} otherwise. An independence system that additionally satisfies:
        \begin{enumerate}
            \setcounter{enumi}{2}
            \item if $ A,B\in \I$ and $ |A|>|B|$, then there exists $ a\in A\backslash B $ such that $ \{a\}\cup B \in \I $ (\emph{augmentation property})
        \end{enumerate}
        is called a \emph{matroid}, denoted $M=(\X,\I)$.
    \end{definition}

    \subsection{Examples of Matroids} 
    Matroid theory in full generality is beyond the scope of this survey; we refer the reader to~\citet{oxley2006matroid} for a detailed treatment. We nevertheless include a few illustrative examples. Our first example shows that the concept of matroid generalizes the notion of linear independence for vector spaces. 
    \begin{example}[Matroid over a finite subset of a vector space]
    {\em Let $ V $ be a vector space and let $ \X $ be a finite subset of $ V $. Then the collection $ \I $ of linearly independent subsets of $ \X $ forms an independence system, with a corresponding matroid $ M=(\X,\I) $. }\oprocend
    \end{example}
    
    To provide an example, consider an undirected graph $ \G=(V,\E)$, with vertex set $ V$ and edge set $ \E$. We say that $ \G $ is \emph{acyclic} (or \emph{forest}) if it does not have any cycle, and say that $ \G$ is a \emph{tree} if it is additionally connected.

    \begin{example}[Graphic matroid]
    {\em Let $ \G=(V,\E)$ be an undirected graph, and let $ \I \subseteq 2^{\E} $ be the set of all subsets of edges of $ \G $ that form a forest (see Figure~\ref{fig:ex-graph-tree}). 
    \begin{figure}[htb!]
    \centering
    \begin{tikzpicture}[scale=0.95,node distance=1.4cm]
        \node[main node] (1) {$1$};
        \node[main node] (2) [below right of=1]  {$2$};
                    \node[main node] (7) [right of=2]  {$7$};
        \node[main node] (3) [below left of= 1] {$3$};
        \node[main node] (4) [below of= 2] {$4$};
        \node[main node] (5) [below of= 3] {$5$}; 
            \node[main node] (6) [ left of= 5] {$6$}; 
        \path[draw]
        (7) edge (1)
        (1) edge (3)
        (1) edge (2)
        (2) edge (3)
        (2) edge (4)
        (3) edge (5)
        (5) edge(6);    
        \end{tikzpicture}
    \hspace{4em}
    \begin{tikzpicture}[scale=0.95,node distance=1.4cm]
        \node[main node] (1) {$1$};
        \node[main node] (2) [below right of=1]  {$2$};
                    \node[main node] (7) [right of=2]  {$7$};
        \node[main node] (3) [below left of= 1] {$3$};
        \node[main node] (4) [below of= 2] {$4$};
        \node[main node] (5) [below of= 3] {$5$}; 
            \node[main node] (6) [ left of= 5] {$6$}; 
        \path[draw]
        (7) edge (1)
        (1) edge (2)
        (2) edge (4)
        (3) edge (5)
        (5) edge(6);    
        \end{tikzpicture}
    \caption{An undirected graph (left) and a subgraph that is a forest (right). Any subgraph of the graph on the right is still a forest.}\label{fig:ex-graph-tree}
    \end{figure}
    Then $ \I $ is an independence system: suppose that $ G' $ is a subgraph formed by a set of edges $ B' \in \I $, and consider $ A' \subseteq B' $. Then the subgraph constructed by the edges in $ A' $ is a subgraph of the forest $ G' $, hence itself acyclic, so $A' \in \I $. Moreover, the augmentation property holds, because a forest with smaller number of edges than another can be grown to a bigger one by using an edge of the larger forest.} 
    \oprocend
    \end{example}

    \begin{example}[Partition matroid]
        \em{Let $ \X $ be a finite set partitioned into $ k $ non-empty subsets $ \X_1,\ldots, \X_k $, with a set of positive integers $ \ell_1,\ldots, \ell_k$ associated to each subset respectively. Then it is easy to verify that the collection $ \I $ of all subsets $ A\subseteq \X$ such that $ | A \cap \X_i |\leq \ell_i $ for all $ i \in \{1,\ldots, k\} $ forms an independence system. The corresponding matroid $ M=(\X,\I)$ is called the partition matroid and plays a key role in some of our constructions.} \oprocend
    \end{example}

\begin{example}[Uniform matroid]
{\em Perhaps the simplest and most commonly encountered matroid is the \emph{uniform matroid} $U_{k,n}$ on a ground set $\X$ with $|\X|=n$. Its independent sets are precisely the subsets of $\X$ of size at most $k$:
\[
    \I = \{A \subseteq \X : |A| \leq k\}.
\]
This corresponds to a \emph{cardinality constraint}: one may select at most $k$ elements. The augmentation property is immediate: if $A,B\in\I$ with $|A|<|B|$, then $|A|<k$, and for any $x\in B\setminus A$ we have $A\cup\{x\}\in\I$. The uniform matroid is the structure associated with cardinality-constrained submodular maximization; see Theorem~\ref{thm:greedy-cardinality} below.}\oprocend
\end{example}
    \subsection{Knapsack}

    In certain combinatorial optimization problems, each element of the ground set $e \in \X$ has an associated weight $w_e \geq 0$. A knapsack constraint enforces a budget $B > 0$ on the total selected weight i.e., for a set $S \subseteq \X$, the constraint is
    \begin{equation}
        \sum_{e \in S} w_e \leq B.
    \end{equation}
    The family $\mathcal{I} = \{S \subseteq \X : \sum_{e \in S} w_e \leq B\}$ is an \emph{independence system}: $\emptyset \in \mathcal{I}$ (trivially, since the empty sum is zero), and for any $A \in \mathcal{I}$ and $A' \subseteq A$, we have $\sum_{e \in A'} w_e \leq \sum_{e \in A} w_e \leq B$, so $A' \in \mathcal{I}$ (downward-closed property). However, $\mathcal{I}$ is generally \emph{not} a matroid because the augmentation property can fail. To see this, let $\X = \{a, b, c\}$ with weights $w_a = 2, w_b = 1, w_c = 1$ and budget $B = 2$. Then $A = \{a\} \in \mathcal{I}$ (weight 2) and $A' = \{b, c\} \in \mathcal{I}$ (weight 2) are both feasible with $|A| = 1 < |A'| = 2$. Yet adding either $b$ or $c$ to $A$ gives total weight $3 > B$, so no augmentation from $A'$ into $A$ is possible, violating the augmentation property. Knapsack constraints generalize the cardinality constraint (recovered when all weights are equal) but, as the example shows, lack matroid structure, and therefore require a dedicated algorithm rather than the plain greedy; we present a greedy-based $(1-1/\mathrm{e})$-approximation for the knapsack-constrained problem in Section~\ref{sec:algorithms} (Theorem~\ref{thm:greedy-knapsack}).

    \subsection{Intersection of matroids and $p$-systems}

    Many practical constraints arise as the \emph{intersection} of $k$ matroids on the same ground set $\X$: a set $S$ is feasible if and only if $S \in \mathcal{I}_1 \cap \cdots \cap \mathcal{I}_k$. For example, requiring a set to be a common independent set of two matroids (e.g., satisfying both a cardinality and a partition constraint simultaneously) yields a $k=2$ case. The intersection of $k$ matroids is itself an independence system (downward-closed) though not generally a matroid. This structure is the setting of the greedy algorithm for $k$-matroid intersections analyzed in Section~\ref{sec:algorithms}, which achieves a $\tfrac{1}{k+1}$-approximation ratio for monotone submodular maximization; the approximation degrades gracefully as $k$ increases. More generally, this independence system is a $p$-system with $p=k$, and the same $1/(p+1)$-guarantee extends to this broader class~\citep{MLF-GLN-LAW:78-II}. Formally, $(\X,\I)$ is a \emph{$p$-system} if, for every $Y\subseteq\X$, any two inclusion-wise maximal independent subsets of $Y$ have sizes within a factor $p$ of each other.

\section{Algorithms}\label{sec:algorithms}

    \subsection{Submodular optimization problems and results}
    Recall that we are interested in problems of the form
    \begin{equation} 
        \begin{aligned}
            & \underset{S}{\mathrm{maximize}}
            & & f(S) \\
            & \text{subject to}
            & & S \in \I \subseteq 2^\X
        \end{aligned}
    \end{equation}
    where $f$ is a submodular set function and $\I$ is a family of subsets of the finite ground set $\X$ that represents the constraints. This problem (even when unconstrained) is NP-hard as it contains the \textsc{Max-Cut} problem, known to be NP-hard \citep{karp1975computational}, as a special case. Thus, efforts have focused on the development of \emph{approximation algorithms} where we try to find a solution that closely approximates the optimal value of the problem.
    
    Throughout this section, we assume access to a \emph{function evaluation oracle} for $f$ (returning $f(S)$ for any queried $S$) and an \emph{independence oracle} for $\I$ (returning whether $S \in \I$ for any queried $S$). Algorithm complexity is measured in the number of such oracle calls.

    \begin{definition}[Approximation algorithm]
        An $\alpha$-approximation algorithm for an optimization problem is a polynomial-time algorithm that, for every instance of the problem, produces a solution whose value is within a factor of $\alpha$ of the value of an optimal solution. The value $\alpha$ is known as the approximation factor, and for maximization problems we always have $0 < \alpha \leq 1$.
    \end{definition}

    We restrict our study to non-negative functions since we are looking for multiplicative approximation factors. 

    A variety of algorithms have been developed for submodular maximization. We summarize the main results in submodular maximization in Table~\ref{tab:submod-results}. For each case, we give the algorithm with the best known approximation factor along with the best attainable approximation factor (under standard complexity-theoretic assumptions). The approximation factors of these algorithms depend on whether the function $f$ is monotone as well as on the structure of the constraint set $\I$. The algorithms can be broadly classified into greedy, local search, and relax and round. Local search and relax-and-round methods have the best approximation factors but have high oracle complexities and are primarily of theoretical interest. In contrast, the greedy algorithm is fast, simple to implement, and in some cases achieves the best approximation factor (sometimes in combination with random subsampling).
    
    In this section, we describe the greedy algorithm along with its variants (acceleration and subsampling) and analyze worst-case performance.
    
        \begin{table*}[]
        \renewcommand{\arraystretch}{1.25}
            \centering
            \footnotesize
            \resizebox{\textwidth}{!}{%
            \begin{tabular}{@{}c  c  c c c c  c@{}}
            \toprule
                    \multicolumn{2}{c}{Problem} & \phantom{a} & Hardness & \phantom{a} & \multicolumn{2}{c}{Results}\\
                    \cmidrule{1-2} \cmidrule{6-7}
                    Monotone & Set System & && & Algorithm Type & Approximation ratio \\ 
                    \midrule
                    \cmark & Cardinality && $1-\frac{1}{\mathrm{e}}$ \citep{nemhauser1978best} && \textbf{Greedy} \citep{GLN-LAW-MLF:78-I} & $1-\frac{1}{\mathrm{e}}$ \\[1em]
                    
                    \cmark & $1$ matroid && $1-\frac{1}{\mathrm{e}}$ \citep{vondrak2013symmetry} && \textbf{Continuous greedy} \citep{calinescu2011maximizing} & $1-\frac{1}{\mathrm{e}}$  \\
                    &&&&& Greedy \citep{MLF-GLN-LAW:78-II} & $\frac{1}{2}$ \\[1em]
                    
                    \cmark & $k$ matroids && $\frac{\log{k}}{k}$ \citep{hazan2006complexity} \footnote{The authors in \citet{hazan2006complexity} show that the maximum $k$-dimensional matching problem cannot be approximated to a factor better than $O(\log k/k)$. This also holds for submodular maximization over $k$ matroids because the $k$-dimensional matching problem is a special case of maximizing a linear function over a partition matroid.} && \textbf{Local search} \citep{lee2010submodular} & $\frac{1}{k+\epsilon}$  \\
                    &&&&& Greedy \citep{MLF-GLN-LAW:78-II} & $\frac{1}{k+1}$ \\[1em]
                    
                    \xmark &  Unconstrained && $\frac{1}{2}$ \citep{feige2011maximizing} && \textbf{Double greedy (randomized)} \citep{buchbinder2015tight} & $\frac{1}{2}$ \\
                    &&&&& Double greedy (deterministic) \citep{buchbinder2015tight} & $\frac{1}{3}$ \\
                    &&&&& Uniformly random set \citep{feige2011maximizing} & $\frac{1}{4}$ \\[1em]
                    
                    \xmark &  Cardinality && 0.478 \citep{qi2024maximizing} && \textbf{Continuous greedy} \citep{buchbinder2024constrained} & 0.401 \\
                    &&&&& Random greedy \citep{buchbinder2014submodular} & $1/\mathrm{e}$ \\[1em]
                    
                    \xmark &  1 matroid && 0.478 \citep{qi2024maximizing}  && \textbf{Continuous greedy} \citep{buchbinder2024constrained} & 0.401 \\
                    &&&&& Sample greedy \citep{harshaw2022power} & 0.25 \\[1em]
                    
                    \xmark &  $k$ matroids && $\frac{\log{k}}{k}$ \citep{hazan2006complexity} && \textbf{Local search} \citep{lee2010submodular} & $\frac{1}{k+1+\frac{1}{k-1}+\epsilon}$\\
                    &&&&& Sample greedy \citep{harshaw2022power} & $\frac{k}{(k+1)^2}$\\ 
                    \bottomrule
            \end{tabular}
            }
            \caption{Known results for variations of the centralized submodular maximization problem. The algorithms in bold achieve the best known approximation guarantees. Non-bolded entries achieve weaker approximation factors but are often simpler or more scalable; greedy-based algorithms, in particular, remain the standard choice for large-scale problems. Local search and continuous-greedy methods match the best known factors but typically at substantially higher computational cost; this survey focuses on the greedy-based algorithms.}
            \label{tab:submod-results}
        \end{table*}

\subsection{Greedy algorithm}

In the absence of additional information, it is natural to study the performance of a greedy strategy which at every iteration aims at maximizing the marginal reward. In particular, we start with the empty $ S=\emptyset$ and at iteration $ t\geq 1 $ of the greedy procedure, we grow the set by adding an element, if it exists, with maximal marginal return such that the resulting set stays feasible with respect to the constraint set $ \I $. This is formally displayed as $\texttt{Greedy}(f,\X,\mathcal{I})$. 

\begin{center}
\begin{minipage}{8cm}
\begin{algorithm}[H]
\algobox{
\textbf{function} $\texttt{Greedy}(f,\mathcal{X},\mathcal{I})$
\begin{algorithmic}[1]
\REQUIRE{A submodular function $f$, and a set system $(\X, \I)$}
\ENSURE{A set $S \in \mathcal{I}$ that greedily maximizes $f(S)$}
  \STATE $S = \emptyset$
  \REPEAT 
  \STATE Let $\mathcal{X}_{\text{valid}} = \{x \in\mathcal{X}\setminus{S} \;|\; S\cup\{x\} \in \mathcal{I}\}$
  \IF{$\mathcal{X}_{\text{valid}} = \emptyset$} 
    \RETURN $S$
  \ENDIF
  \STATE $x \in \argmax_{\bar x \in \mathcal{X}_{\text{valid}}} \Delta(\bar x\;|\;S)$
  \STATE $S := S \cup \{x\}$
  \UNTIL{}
\end{algorithmic}
}
\end{algorithm}
\end{minipage}
\end{center}

An example execution of the greedy algorithm is shown in Figure~\ref{fig:greedy_cardinality}, where base set $\X$ is the set of five discs shown for Iteration 0,  the submodular function $f$ is the area covered by the selected sensors, and the set $\I$ is the cardinality constraint $\I = \big\{S \subseteq \X \; \big|\; |S| \leq 3\big\}$. 

The approximation factor of the greedy algorithm depends on whether the function $f$ is monotone and on the type of set system $\I$. We begin with arguably the simplest setting of a monotone function subject to a cardinality constraint.

\begin{figure}[htb]
    \centering
     \begin{subfigure}[b]{0.2\textwidth}
         \centering
         \includegraphics[width=\textwidth]{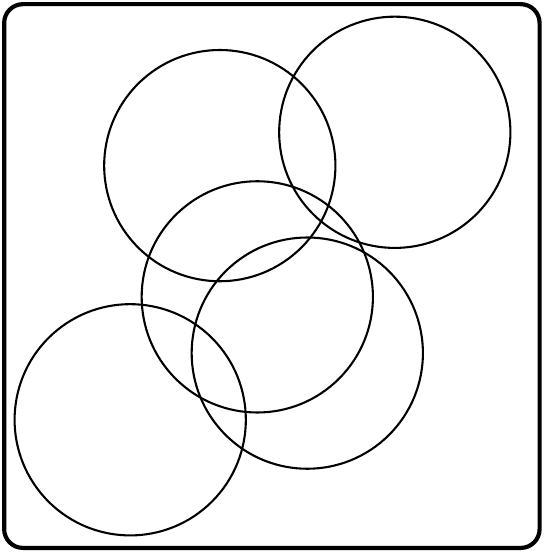}
         \caption{Iteration 0}
         \label{fig:greedy1}
     \end{subfigure}
     \begin{subfigure}[b]{0.2\textwidth}
         \centering
         \includegraphics[width=\textwidth]{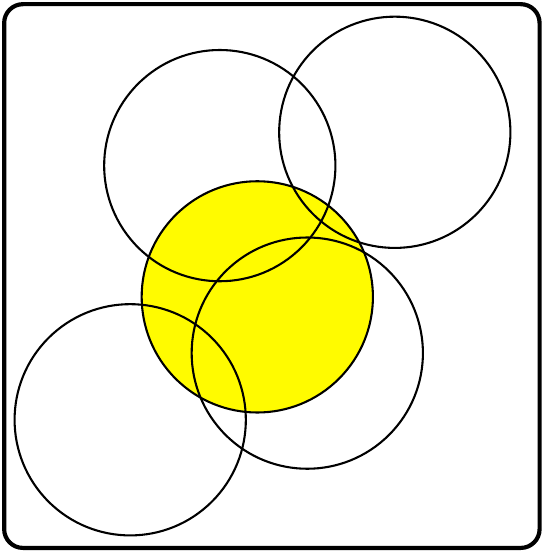}
         \caption{Iteration 1}
         \label{fig:greedy2}
     \end{subfigure}
     \begin{subfigure}[b]{0.2\textwidth}
         \centering
         \includegraphics[width=\textwidth]{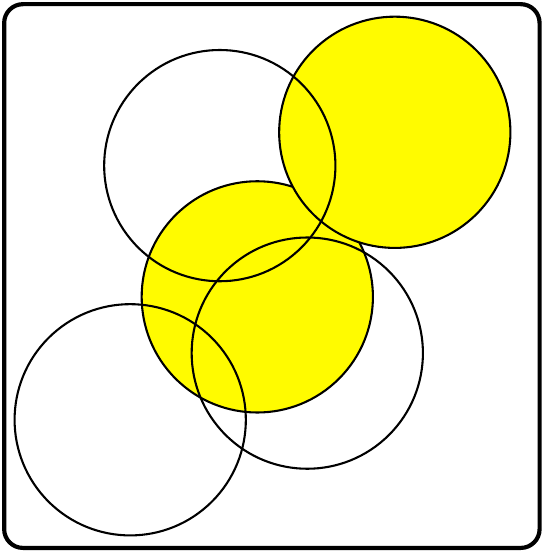}
         \caption{Iteration 2}
         \label{fig:greedy3}
     \end{subfigure}
     \begin{subfigure}[b]{0.2\textwidth}
         \centering
         \includegraphics[width=\textwidth]{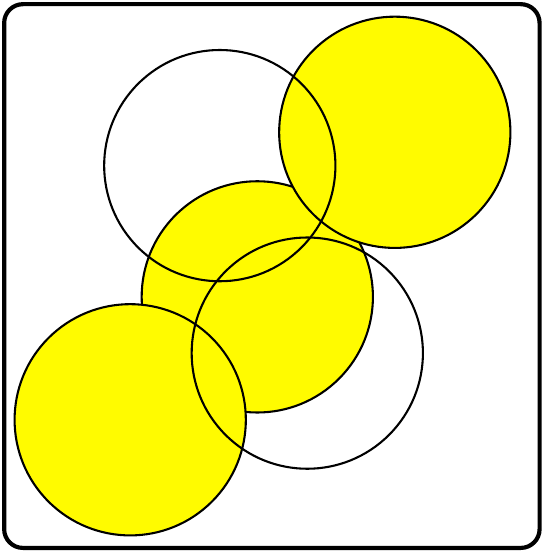}
         \caption{Iteration 3}
         \label{fig:greedy4}
     \end{subfigure}
    \caption{An example execution of the greedy algorithm on a sensor coverage problem with the cardinality constraint $|S| \leq 3$.}
    \label{fig:greedy_cardinality}
\end{figure}

\subsubsection*{Monotone objective with a cardinality constraint}

One of the most well-known results in submodular maximization is the analysis of the greedy algorithm for a monotone submodular function subject to a cardinality constraint \citep{GLN-LAW-MLF:78-I}.

\begin{theorem}[Performance of Greedy Algorithm for Cardinality Constraint~\citep{GLN-LAW-MLF:78-I}]\label{thm:greedy-cardinality}
Consider the cardinality-constrained submodular maximization problem with a normalized, monotone submodular function $f:2^{\X} \to\real$, subject to $|S|\leq k$ for a given positive integer $k$, and let $S^* \in \argmax_{|S|\leq k} f(S)$. Then the greedy algorithm returns a set $S_G$ satisfying
\[
\frac{f(S_G)}{f(S^*)} \geq \left(1 - \frac{1}{\mathrm{e}}\right).
\]
\end{theorem}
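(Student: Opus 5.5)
The plan is the classical recursion on the gap to optimum. Let $S_0=\emptyset\subseteq S_1\subseteq\cdots\subseteq S_k=S_G$ denote the greedy iterates, where $S_i$ is obtained by adding the element $x_i$ of maximal marginal gain to $S_{i-1}$. Since monotonicity makes every singleton addition feasible while $|S|<k$, the greedy algorithm runs for exactly $k$ steps when $|\X|\ge k$; the case $|\X|<k$ is trivial, since then greedy returns $\X$ and monotonicity gives $f(\X)\ge f(S^*)$. We may also assume without loss of generality that $|S^*|=k$ or simply $|S^*|\le k$. The quantity to track is the gap $\delta_i := f(S^*)-f(S_i)$, and the goal is the contraction $\delta_{i}\le (1-1/k)\,\delta_{i-1}$.

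\textbf{Key inequality.} Fix $i\ge 1$. By monotonicity, $f(S^*)\le f(S^*\cup S_{i-1})$. Enumerating $S^*\setminus S_{i-1}=\{y_1,\dots,y_m\}$ with $m\le k$ and telescoping, $f(S^*\cup S_{i-1})-f(S_{i-1})$ becomes a sum of marginal gains of the $y_j$ with respect to sets that contain $S_{i-1}$. The diminishing-returns characterization of Proposition~\ref{prop:equi-sub-char} bounds each term by $\Delta(y_j\mid S_{i-1})$. Each $y_j$ was a valid candidate at step $i$, so $\Delta(y_j\mid S_{i-1})\le \Delta(x_i\mid S_{i-1})=f(S_i)-f(S_{i-1})$. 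Hence
\[
f(S^*)-f(S_{i-1})\le k\,\bigl(f(S_i)-f(S_{i-1})\bigr),
\]
that is, $\delta_{i-1}\le k(\delta_{i-1}-\delta_i)$, which rearranges to $\delta_i\le(1-1/k)\delta_{i-1}$.

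\textbf{Conclusion.} Iterating gives $\delta_k\le(1-1/k)^k\delta_0$, and normalization gives $\delta_0=f(S^*)$. Using $1-t\le e^{-t}$, we get $f(S^*)-f(S_G)\le e^{-1}f(S^*)$, which rearranges to the claim. If $f(S^*)=0$, the ratio is read as trivially satisfied, since then $f(S_G)\ge 0$ by monotonicity and normalization.

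\textbf{Main obstacle.} The only delicate step is the key inequality: correctly combining monotonicity, to pass from $S^*$ to $S^*\cup S_{i-1}$, with the telescoped diminishing-returns bound. One must check that the elements of $S^*\setminus S_{i-1}$ are genuinely valid greedy candidates at step $i$. This holds because $|S_{i-1}|<k$ under a cardinality constraint.
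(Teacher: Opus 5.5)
Your proposal is correct and follows the paper's argument: the per-step bound $f(S^*)-f(S_{i-1})\le k\,\bigl(f(S_i)-f(S_{i-1})\bigr)$ comes from monotonicity, telescoped diminishing returns and the greedy choice, and is then iterated with $(1-1/k)^k\le \mathrm{e}^{-1}$, with the small-ground-set case handled separately. Two minor points: replacing $m$ by $k$ silently uses $\Delta(x_i\mid S_{i-1})\ge 0$ (from monotonicity), and every addition is feasible while $|S|<k$ because of the cardinality constraint, not because of monotonicity.
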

\begin{proof}
Consider the sequence of sets $S_{t+1} := S_t \cup \{x\}$, where 
\[
x \in \argmax_{\bar x \in \mathcal{X}_{\text{valid}}} \Delta(\bar x\;|\;S_t)
\]
generated by the greedy algorithm, where $S_0 = \emptyset$. We claim that 
\begin{equation}\label{eq:proof-1e-aux1}
f(S_{t}) - f(S_{t-1}) \geq
\frac{1}{k}\,\bigl(f(S^*) - f(S_{t-1})\bigr).    
\end{equation}
Let us first show that when this claim holds, the proof easily follows: note that by~\eqref{eq:proof-1e-aux1}, we have that
\[
f(S_t) 
\geq \left(1 - \frac{1}{k}\right) f(S_{t-1}) + \frac{1}{k}\,f(S^*).
\]
By using this inequality iteratively, we have that 
\[
f(S_t) 
\geq 
f(S^*)\Bigl(1 - \bigl(1 - \tfrac{1}{k}\bigr)^t\Bigr).
\]
If the greedy algorithm exhausts the feasible choices before step $k$ (possible only when $|\X|\le k$), then it has selected every element, so $S_G=\X$ and $f(S_G)\ge f(S^*)$ by monotonicity. Otherwise it performs $k$ steps; setting $t = k$ and noting that 
$(1 - \tfrac{1}{k})^k \le e^{-1}$, giving
\[
f(S_G) 
\geq
f(S^*)\Bigl(1 - \bigl(1 - \tfrac{1}{k}\bigr)^k\Bigr) 
\;\ge\; 
f(S^*)\Bigl(1 - \tfrac{1}{e}\Bigr).
\]
Hence $f(S_G) / f(S^*) \ge 1 - 1/e$, as required. It remains to show that~\eqref{eq:proof-1e-aux1} is true: let us denote $R_{t-1} = S^* \setminus S_{t-1}$. By monotonicity, we have that
\[
f(S_{t-1} \cup R_{t-1}) \ge f(S^*).
\]
Using this along with submodularity, we have that 
\begin{align*}
f(S^*)- f(S_{t-1})  &\leq
f(S_{t-1} \cup R_{t-1}) - f(S_{t-1}) \\
&\leq \sum_{x \in R_{t-1}} \bigl(f(S_{t-1} \cup \{x\}) - f(S_{t-1})\bigr)    \\
&\leq |R_{t-1}|\left(
f(S_t) - f(S_{t-1})\right) \\   
&\leq k\left(
f(S_t) - f(S_{t-1})\right),    
\end{align*}
which proves~\eqref{eq:proof-1e-aux1}, hence finishing the proof. 
\end{proof}

The power of this result is that the simple greedy algorithm (which runs in polynomial time) is guaranteed to find a solution whose value is at least $(1-1/\mathrm{e}) \approx 63\%$ of the optimal. This worst-case bound admits two instance-dependent refinements via the structural parameters introduced in Section~\ref{sec:submod-functions}. Using the curvature $c$ of $f$ (Definition~\ref{def:curvature}), the guarantee improves to $\tfrac{1}{c}(1-e^{-c})$~\citep{CONFORTI1984251}, which recovers $1 - 1/e$ at $c = 1$ and collapses to $1$ when $f$ is modular ($c = 0$); the optimal curvature-dependent factor of $1 - c/\mathrm{e}$, together with a matching hardness bound, was later established by~\citet{sviridenko2017optimal}. When $f$ is not exactly submodular but has submodularity ratio~$\gamma$, an analogue of the proof above yields the guarantee $(1 - e^{-\gamma})$ for weakly submodular maximization with a cardinality constraint~\citep{das2011submodular}, recovering $1 - 1/e$ at $\gamma = 1$.

\subsubsection*{Monotone objective with matroid constraints}
The worst-case guarantee of the greedy algorithm for matroids has also been studied \citep{MLF-GLN-LAW:78-II}.

\begin{theorem}[Performance of Greedy Algorithm for the Intersection of $k$ Matroids~\citep{MLF-GLN-LAW:78-II}]\label{thm:greedy-matroid}
Consider the submodular maximization problem with a normalized, monotone submodular function $f:2^{\X} \to\real$ subject to the intersection of $k$ matroids with $k \geq 1$, and let $S^*$ be an optimal solution. Then the greedy algorithm returns a set $S_G$ satisfying
\[
\frac{f(S_G)}{f(S^*)} \geq \frac{1}{k+1}.
\]
When $k=1$, the greedy solution $f(S_G)$ is at least $1/2$ of the optimal value $f(S^*)$.
\end{theorem}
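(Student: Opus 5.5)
The plan is the classical charging argument of Fisher--Nemhauser--Wolsey. Let the greedy run produce $S_G=\{g_1,\ldots,g_m\}$ in the order chosen, with $S_t=\{g_1,\ldots,g_t\}$ and $\delta_t=\Delta(g_t\mid S_{t-1})$. By monotonicity and submodularity (Proposition~\ref{prop:equi-sub-char}),
\[
f(S^*)\le f(S_G\cup S^*)\le f(S_G)+\sum_{y\in S^*\setminus S_G}\Delta(y\mid S_G).
\]
So it suffices to show $\sum_{y\in S^*\setminus S_G}\Delta(y\mid S_G)\le k\,f(S_G)=k\sum_t\delta_t$. To do this, I will charge each $y\in S^*\setminus S_G$ to a greedy step $t$ with $\Delta(y\mid S_G)\le\Delta(y\mid S_{t-1})\le\delta_t$. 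The second inequality holds whenever $S_{t-1}\cup\{y\}$ is feasible, by the greedy choice. I then need each step to receive at most $k$ charges.

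For the assignment, for each $t=0,\ldots,m$ let $A_t\subseteq S^*\setminus S_G$ be the elements $y$ with $S_t\cup\{y\}\in\I$. These sets are nested decreasing, and $A_m=\emptyset$ because greedy stops only when no feasible extension exists. Each $y$ is charged to the step $t$ with $y\in A_{t-1}\setminus A_t$. The key counting claim is $|A_t|\le k\,t$, or equivalently $|S^*\setminus A_t|$-type bounds. This is where the matroid structure enters.

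For a single matroid $M_i$, consider the elements $y\in S^*\setminus S_G$ with $S_t\cup\{y\}\notin\I_i$. Every such $y$ lies in the $M_i$-span of $S_t$. Since $S^*$ is independent in $M_i$, at most $\mathrm{rank}_i(S_t)=t$ elements of $S^*$ can lie in that span; this is where augmentation is used, via the standard rank/span argument. Taking the union over the $k$ matroids, at most $kt$ elements of $S^*\setminus S_G$ are blocked after step $t$. Hence $|(S^*\setminus S_G)\setminus A_t|\le kt$.

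Writing $N_t$ for the number of elements charged at steps $1,\ldots,t$, this gives $N_t\le kt$ for all $t$. Since the $\delta_t$ are non-increasing (by submodularity, $\delta_{t+1}\le\Delta(g_{t+1}\mid S_{t-1})\le\delta_t$, as $g_{t+1}$ was feasible at step $t$ by downward closure), a summation-by-parts (Abel) argument yields
\[
\sum_y\delta_{t(y)}=\sum_t(N_t-N_{t-1})\delta_t\le k\sum_t\delta_t .
\]

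The main obstacle is the span-counting step: showing that elements of an independent set $S^*$ that cannot be added to $S_t$ in $M_i$ number at most $|S_t|$. I would prove it by noting that $\{y\in S^*: S_t\cup\{y\}\notin\I_i\}\subseteq\mathrm{span}_i(S_t)$. An independent subset of $\mathrm{span}_i(S_t)$ has size at most $|S_t|$; otherwise augmentation would let us add one of its elements to $S_t$ while keeping independence, contradicting that it lies in the span. Everything else is routine. For $k=1$ the bound becomes $f(S^*)\le 2f(S_G)$, which is the stated $1/2$.
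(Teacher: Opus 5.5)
Your proof is correct and is essentially the classical Fisher--Nemhauser--Wolsey charging argument that the paper cites rather than reproduces: bound the number of blocked elements by $kt$ via augmentation in each matroid, use that the greedy increments are non-increasing, and finish by summation by parts, where $\delta_m\ge 0$ (from monotonicity) closes the Abel sum. One slip to fix: the counting claim is not $|A_t|\le kt$, which already fails at $t=0$, but $|(S^*\setminus S_G)\setminus A_t|\le kt$, which is what you actually prove and use.
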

We refer to~\citet{MLF-GLN-LAW:78-II} for the proof. As in the cardinality case, the worst-case factor admits a curvature refinement: for a single matroid constraint, the $\tfrac{1}{2}$ guarantee sharpens to $\tfrac{1}{1+c}$~\citep{CONFORTI1984251}, again recovering $1/2$ at $c=1$ and collapsing to $1$ in the modular case $c=0$.

\subsubsection*{Monotone objective with a knapsack constraint}
A knapsack constraint (Section~\ref{sec:constraints}) generalizes the cardinality constraint by assigning each element $e$ a weight $w_e \geq 0$ and bounding the total weight of the selected set by a budget $B$. Here the plain greedy algorithm can perform arbitrarily badly, since adding the element of largest marginal gain ignores its cost: a single heavy element can exhaust the budget while contributing little relative to many lighter ones. A natural fix is the \emph{cost-benefit greedy}, which adds at each step the feasible element of largest marginal-gain-to-weight ratio $\Delta(x\mid S)/w_x$; on its own, this still does not guarantee a constant factor in the worst case. A simple remedy, due to~\citet{khuller1999budgeted} for budgeted maximum coverage, is to return the better of the cost-benefit greedy solution and the best single feasible element, which already yields a $\tfrac{1}{2}(1-1/\mathrm{e})$ guarantee. \citet{sviridenko2004note} showed that running cost-benefit greedy from every feasible seed set of size at most three and returning the best of these runs---a \emph{partial enumeration}---recovers the optimal guarantee.

\begin{theorem}[Greedy for a knapsack constraint~\citep{sviridenko2004note}]\label{thm:greedy-knapsack}
Let $f:2^{\X}\to\real_{\geq 0}$ be a normalized, monotone submodular function, and consider the knapsack-constrained problem $\max\{f(S) : \sum_{e\in S} w_e \leq B\}$ with optimal solution $S^*$. The partial-enumeration cost-benefit greedy algorithm returns a set $S$ satisfying
\[
\frac{f(S)}{f(S^*)} \geq 1 - \frac{1}{\mathrm{e}}.
\]
\end{theorem}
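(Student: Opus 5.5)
The argument follows Sviridenko's, adapting the proof of Theorem~\ref{thm:greedy-cardinality} to weighted steps. Order the optimal set as $S^*=\{o_1,o_2,\dots\}$ greedily, so that $o_1$ maximizes $f(\{o\})$ over $o \in S^*$, $o_2$ maximizes $\Delta(o\mid\{o_1\})$, and so on. Let $Y=\{o_1,o_2,o_3\}$; if $|S^*|\le 3$, the enumeration finds $S^*$ and we are done. The algorithm tries the seed $Y$, so it suffices to analyze the cost-benefit greedy run started from $Y$ on the residual function $g(S)=f(S\cup Y)-f(Y)$, which is submodular by the residual rule of Section~\ref{sec:submod-operations}. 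This run uses residual budget $B'=B-w(Y)$ and ground set $\X\setminus Y$. The greedy ordering gives the key bound: for every $x\in S^*\setminus Y$ and any $Z\supseteq Y$,
\[
\Delta(x\mid Z)\le \tfrac13 f(Y).
\]
To see this, note that $\Delta(x\mid\{o_1,\dots,o_{i-1}\})\le \Delta(o_i\mid\{o_1,\dots,o_{i-1}\})$ for $i=1,2,3$. Summing over $i$ and applying diminishing returns gives $3\Delta(x\mid Y)\le f(Y)$.

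Next, run cost-benefit greedy from $Y$ and follow it until the first time $t^*+1$ that it selects an element $x_{t^*+1}$ of $S^*\setminus Y$ that does not fit the budget. If no such time exists, the run never rejects an element of $S^*$, and monotonicity gives the bound directly. Let $S_t$ be the set before step $t+1$, with $S_0=Y$. As in the proof of \eqref{eq:proof-1e-aux1}, use submodularity and monotonicity together with the ratio rule: every $x\in S^*\setminus S_t$ still fits the residual budget up to time $t^*$. This yields
\[
g(S^*\setminus Y)-g(S_t\setminus Y)\le \sum_{x\in S^*\setminus S_t}\Delta(x\mid S_t)\le \frac{B'}{w_{x_{t+1}}}\,\bigl(f(S_{t+1})-f(S_t)\bigr),
\]
where $w(S^*\setminus Y)\le B'$. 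The next step is to unroll this with the standard weighted recursion, using $1-u\le e^{-u}$. This gives
\[
g(S_{t^*}\setminus Y)+\Delta(x_{t^*+1}\mid S_{t^*})\ \ge\ \Bigl(1-e^{-\sum_{t\le t^*} w_{x_{t+1}}/B'}\Bigr)\,g(S^*\setminus Y)\ \ge\ (1-e^{-1})\,g(S^*\setminus Y),
\]
where the last inequality holds because the total weight including the overflowing element exceeds $B'$.

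The main obstacle is the overflowing element. It cannot be kept, so we must charge its gain to something. This is exactly where the seed bound enters: $\Delta(x_{t^*+1}\mid S_{t^*})\le \tfrac13 f(Y)$. Hence
\[
f(S)\ \ge\ f(Y)+(1-e^{-1})\bigl(f(S^*)-f(Y)\bigr)-\tfrac13 f(Y).
\]
Since $e^{-1}\ge\tfrac13$, the coefficient of $f(Y)$ is $\tfrac23-(1-e^{-1}) = e^{-1}-\tfrac13\ge 0$, so the right-hand side is at least $(1-e^{-1})f(S^*)$. Finally, there is a subtlety in the recursion: it uses the ratio $\Delta/w$ only against elements of $S^*$ that are still feasible. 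We must therefore check that greedy elements rejected for exceeding the budget before $t^*$ lie outside $S^*$, which holds by the definition of $t^*$, so these rejections do not affect the inequality.
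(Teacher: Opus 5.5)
Your proof is correct and is essentially Sviridenko's argument, which is all the paper relies on here, since it states the theorem with a citation and gives no proof of its own. The steps match that argument: seed with the three greedily ordered optimal elements $Y$, use $\Delta(x\mid Z)\le\tfrac13 f(Y)$ to pay for the first rejected element of $S^*\setminus Y$, and close with $e^{-1}\ge\tfrac13$. One phrase needs fixing: the claim that every $x\in S^*\setminus S_t$ still fits the budget before time $t^*$ is false in general, because greedy may already have spent budget on elements outside $S^*$. That claim is not actually needed, since, as your last paragraph says, no such $x$ has been discarded yet. Each therefore remains in the candidate pool, and $\Delta(x\mid S_t)/w_x\le\Delta(x_{t+1}\mid S_t)/w_{x_{t+1}}$ holds whether or not $x$ currently fits.
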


This $(1-1/\mathrm{e})$ factor coincides with the cardinality case, which a knapsack with unit weights recovers, and is the best attainable for monotone submodular maximization. The size-three enumeration is what lifts cost-benefit greedy to the optimal factor, at the price of an $O(n^5)$ oracle complexity; in practice the cheaper cost-benefit greedy, optionally with lazy evaluation, is often used as a faster heuristic.

\subsubsection*{Double Greedy}
The (deterministic) double greedy algorithm of~\citet{buchbinder2015tight} solves the unconstrained submodular maximization problem (USM) with a $\frac{1}{3}$-approximation guarantee; its randomized variant ($\texttt{RandomizedUSM}$, below) attains the tight $\frac{1}{2}$. The algorithm maintains two sets: $X$ (initially empty) and $Y$ (initially the full ground set $\X$). For each element $u_i$, it computes two quantities: $a_i = f(X_{i-1} \cup \{u_i\}) - f(X_{i-1})$, the gain from adding $u_i$ to $X$, and $b_i = f(Y_{i-1} \setminus \{u_i\}) - f(Y_{i-1})$, the loss from removing $u_i$ from $Y$. The element is then greedily assigned: if adding $u_i$ to $X$ is more beneficial, set $X_i = X_{i-1} \cup \{u_i\}$; otherwise, remove $u_i$ from $Y$. At termination, by construction $X_n = Y_n$, and this common set is returned.

\begin{center}
\begin{minipage}{9cm}
\begin{algorithm}[H]
\algobox{
\textbf{function} $\texttt{DoubleGreedy}(f, \X)$
\begin{algorithmic}[1]
\REQUIRE{A submodular function $f$, ground set $\X = \{u_1, \dots, u_n\}$ in a fixed order}
\ENSURE{A set $X_n \subseteq \X$}
\STATE {$X_0 \leftarrow \emptyset$, $Y_0 \leftarrow \X$}
\FOR{$i \leftarrow 1$ \textbf{to} $n$}
  \STATE $a_i \leftarrow f(X_{i-1} \cup \{u_i\}) - f(X_{i-1})$
  \STATE $b_i \leftarrow f(Y_{i-1} \setminus \{u_i\}) - f(Y_{i-1})$
  \IF{$a_i \geq b_i$}
    \STATE $X_i \leftarrow X_{i-1} \cup \{u_i\}$, $Y_i \leftarrow Y_{i-1}$
  \ELSE
    \STATE $X_i \leftarrow X_{i-1}$, $Y_i \leftarrow Y_{i-1} \setminus \{u_i\}$
  \ENDIF
\ENDFOR
\RETURN $X_n$
\end{algorithmic}
}
\end{algorithm}
\end{minipage}
\end{center}

\begin{theorem}[Performance of Double Greedy~\citep{buchbinder2015tight}]\label{thm:double-greedy}
Let $f : 2^{\X} \to \real_{\geq 0}$ be a non-negative submodular function and let $S^* \subseteq \X$ be an optimal solution to the unconstrained maximization problem. Then $\texttt{DoubleGreedy}$ returns a set $X_n$ satisfying
\[
\frac{f(X_n)}{f(S^*)} \geq \frac{1}{3}.
\]
This is the best factor attainable by the deterministic algorithm; randomizing the assignment (Theorem~\ref{thm:randomized-usm}) improves it to the tight $\frac{1}{2}$.
\end{theorem}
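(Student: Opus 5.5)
I will follow the standard Buchbinder--Feldman--Naor--Schwartz analysis, which tracks a hybrid set interpolating between the algorithm's state and the optimum. Define $\mathrm{OPT}_i := (S^*\cup X_i)\cap Y_i$. Then $\mathrm{OPT}_0 = S^*$, and because $X_n=Y_n$ we get $\mathrm{OPT}_n = X_n$. The goal is a per-step inequality
\begin{equation*}
f(\mathrm{OPT}_{i-1}) - f(\mathrm{OPT}_i) \;\le\; \bigl[f(X_i)-f(X_{i-1})\bigr] + \bigl[f(Y_i)-f(Y_{i-1})\bigr].
\end{equation*}
Summing it over $i=1,\dots,n$ telescopes to
\[
f(S^*) - f(X_n) \le f(X_n)-f(\emptyset) + f(Y_n) - f(\X) \le 2f(X_n),
\]
where the last step uses non-negativity to drop $f(\emptyset)$ and $f(\X)$. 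This gives $f(X_n)\ge f(S^*)/3$.

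The first preliminary step is to show $a_i+b_i\ge 0$. Since $X_{i-1}\subseteq Y_{i-1}\setminus\{u_i\}$, Proposition~\ref{prop:equi-sub-char} applied to $u_i$ gives
\[
f(X_{i-1}\cup\{u_i\})-f(X_{i-1}) \;\ge\; f(Y_{i-1})-f(Y_{i-1}\setminus\{u_i\}),
\]
that is, $a_i\ge -b_i$. Consequently, whichever branch the algorithm takes, the algorithm's gain $\max(a_i,b_i)$ is non-negative. Concretely, if $a_i\ge b_i$ the gain is $a_i$ (from $X$ growing), and otherwise it is $b_i$ (from $Y$ shrinking).

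The main step is bounding the loss $f(\mathrm{OPT}_{i-1})-f(\mathrm{OPT}_i)$. Consider the case $a_i\ge b_i$, so $u_i$ is added to $X$ and $\mathrm{OPT}_i=\mathrm{OPT}_{i-1}\cup\{u_i\}$.
\begin{itemize}
\item If $u_i\in S^*$, then $\mathrm{OPT}_i=\mathrm{OPT}_{i-1}$, the loss is $0$, and the gain is $\ge 0$.
\item If $u_i\notin S^*$, then $\mathrm{OPT}_{i-1}\subseteq Y_{i-1}\setminus\{u_i\}$. Diminishing returns gives
\[
f(\mathrm{OPT}_{i-1})-f(\mathrm{OPT}_{i-1}\cup\{u_i\}) \le f(Y_{i-1}\setminus\{u_i\})-f(Y_{i-1}) = b_i \le a_i,
\]
which matches the gain.
\end{itemize}
The case $a_i<b_i$ is symmetric: $u_i$ is removed from $Y$. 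If $u_i\in S^*$, then $\mathrm{OPT}_{i-1}\setminus\{u_i\}\supseteq X_{i-1}$, and submodularity bounds the loss by $a_i<b_i$. If $u_i\notin S^*$, the loss is $0$.

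I expect this containment bookkeeping to be the only delicate point. In each case one must verify that $\mathrm{OPT}_{i-1}$ (minus or plus $u_i$) sits between $X_{i-1}$ and $Y_{i-1}\setminus\{u_i\}$, so that Proposition~\ref{prop:equi-sub-char} applies in the correct direction. This holds because, as an invariant, $X_{i-1}\subseteq \mathrm{OPT}_{i-1}\subseteq Y_{i-1}$. Once this is checked, the per-step inequality follows and the telescoping argument above completes the proof.

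The remark that $1/3$ is the best factor for this deterministic rule, with randomization improving it to $1/2$, is a comment citing \citet{buchbinder2015tight} and Theorem~\ref{thm:randomized-usm}; I would not reprove it.
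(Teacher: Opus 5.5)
Your proof is correct and is the standard hybrid-set argument of \citet{buchbinder2015tight}: you track $\mathrm{OPT}_i=(S^*\cup X_i)\cap Y_i$, bound $f(\mathrm{OPT}_{i-1})-f(\mathrm{OPT}_i)\le [f(X_i)-f(X_{i-1})]+[f(Y_i)-f(Y_{i-1})]$ via $a_i+b_i\ge 0$ and diminishing returns, and telescope. The survey gives no proof of its own and relies on that citation, so your route matches the paper's, and deferring the tightness remark to the same reference is also what the paper does.
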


\subsubsection*{Lazy Greedy}
$\texttt{LazyGreedy}$ \citep{minoux1978accelerated}, later popularized as CELF~\citep{leskovec2007cost}, is an efficient implementation of the greedy algorithm for submodular maximization. It achieves significant practical speedups by leveraging submodularity to avoid redundant function evaluations, which are often the primary computational bottleneck.

Consider the cardinality constrained problem. This involves selecting a set $S$ of size at most $k$ from a ground set $V$ that maximizes a submodular function $f(S)$. The standard greedy algorithm solves this iteratively: in each of the $k$ steps, it evaluates the marginal gain $\Delta(e|S) = f(S \cup \{e\}) - f(S)$ for every element $e \notin S$ and adds the one with the highest gain to $S$. This requires $O(nk)$ function evaluations, which can be computationally prohibitive for large datasets.

The diminishing returns property of submodularity implies that a marginal gain calculated in an earlier iteration of the greedy algorithm (with a smaller set $S$) serves as a valid {upper bound} on the element's true marginal gain in any subsequent iteration. $\texttt{LazyGreedy}$ uses these upper bounds to lazily defer re-computation, only evaluating an element's true marginal gain when it becomes a candidate for selection. The algorithm maintains a max-priority queue that stores all candidate elements, prioritized by their marginal gains. The key is that these stored gains can be outdated. Note that the worst case runtime is still $O(nk)$ which matches the vanilla greedy algorithm but offers significant speedups in practice.

\begin{remark}
With consistent tie-breaking, $\texttt{LazyGreedy}$ returns exactly the same set as the standard greedy algorithm and therefore inherits the same approximation guarantees. The speedup is purely in the number of function evaluations: in practice it is often orders of magnitude faster than $\texttt{Greedy}$ (see~\citet{minoux1978accelerated}).
\end{remark}

\begin{center}
\begin{minipage}{8.8cm}
\begin{algorithm}[H]
\label{alg:lazy-greedy}
\footnotesize
\algobox{
\textbf{function} $\texttt{LazyGreedy}(f, \X, k)$
\begin{algorithmic}[1]
\REQUIRE{A submodular function $f$, a ground set $\X$, an integer $k$}
\ENSURE{A set $S \subseteq \X$ with $|S|=k$ that greedily maximizes $f(S)$}
  \STATE $S \leftarrow \emptyset$
  \STATE Initialize a max-priority queue $Q$
  \FOR{each element $x \in \X$}
    \STATE $\delta_x \leftarrow f(x)$ 
    \STATE $Q.\text{push}(x, \delta_x)$
  \ENDFOR
  \FOR{$i \leftarrow 1$ \textbf{to} $k$}
    \LOOP
      \STATE $(x, \delta_x) \leftarrow Q.\text{pop}()$ 
      \STATE $\delta'_{x} \leftarrow \Delta(x\;|\;S)$ 
      \IF{$Q$ is empty}
        \STATE $S \leftarrow S \cup \{x\}$
        \STATE \textbf{break} 
      \ENDIF
      \STATE $(x_{next}, \delta_{next}) \leftarrow Q.\text{peek}()$ 
      \IF{$\delta'_{x} \ge \delta_{next}$}
        \STATE $S \leftarrow S \cup \{x\}$ 
        \STATE \textbf{break} 
      \ELSE
        \STATE $Q.\text{push}(x, \delta'_{x})$ 
      \ENDIF
    \ENDLOOP
  \ENDFOR
  \RETURN $S$
\end{algorithmic}
}
\end{algorithm}
\end{minipage}
\end{center}

\subsubsection*{Non-monotone objective with cardinality constraints}
The greedy algorithm is often used for the non-monotone case as well. However, unlike the monotone setting, it provides no meaningful worst-case guarantee, even when implemented with an early stopping rule (halting as soon as all remaining marginal gains are non-positive). The following star-graph example demonstrates this.

\begin{example}[Greedy fails for non-monotone submodular maximization]
{\em  Let $N = \{c, \ell_1, \ldots, \ell_{n-1}\}$ consist of a center $c$ and $n-1$ leaves $L = \{\ell_1, \ldots, \ell_{n-1}\}$. Consider the cardinality-constrained problem with budget $k = n-1$ and define the non-negative, non-monotone function
\[
f(S) =
\begin{cases}
|S \cap L| & \text{if } c \notin S, \\
1 + \epsilon & \text{if } c \in S,
\end{cases}
\]
for small $\epsilon > 0$. This function is submodular: for $A \subseteq B$ and $x \notin B$, the only non-trivial case is $c \notin B$, $x = c$, where $\Delta(c \mid A) = 1 + \epsilon - |A \cap L| \geq 1 + \epsilon - |B \cap L| = \Delta(c \mid B)$. Greedy with early stopping proceeds as:
\begin{itemize}
\item \textbf{Step 1.} $\Delta(c \mid \emptyset) = 1+\epsilon > 1 = \Delta(\ell_i \mid \emptyset)$, so greedy selects $c$.
\item \textbf{After $c$:} $\Delta(\ell_i \mid \{c\}) = (1+\epsilon) - (1+\epsilon) = 0$ for every leaf. All marginals are zero, so early-stopping greedy halts.
\end{itemize}
Greedy returns $S_G = \{c\}$ with $f(S_G) = 1+\epsilon$. The optimal feasible solution is $S^* = L$ with $f(S^*) = n-1$, giving
\[
\frac{f(S_G)}{f(S^*)} = \frac{1+\epsilon}{n-1} \;\xrightarrow{\epsilon\to 0}\; \frac{1}{n-1}.
\]
Crucially, no implementation choice rescues greedy: greedy picks $c$ over any leaf in Step~1 (since $1+\epsilon > 1$), and thereafter all marginals are zero so no further improvement is possible. The star-graph intuition is that the center $c$ ``saturates'' the function, making all leaves redundant once $c$ is selected, even though the leaves collectively have value $n-1 \gg 1 + \epsilon$.} \oprocend
\end{example}

The root cause of greedy's failure is its deterministic nature: once a high-gain element like $c$ is selected, the algorithm is locked into a bad partial solution and no subsequent element can recover value. The $\texttt{RandomGreedy}$ algorithm of~\citet{buchbinder2014submodular} breaks this commitment by \emph{randomizing} the selection at each step. Rather than always taking the single best element, it identifies the top-$k$ marginal-gain candidates and picks one \emph{uniformly at random}, with a carefully chosen probability of skipping the step entirely. This randomization prevents the algorithm from being trapped by any single high-gain decoy, since the optimal solution's elements have a non-negligible chance of being selected at each step regardless of which element would have been chosen greedily.

\begin{center}
\begin{minipage}{9cm}
\begin{algorithm}[H]
\algobox{
\textbf{function} $\texttt{RandomGreedy}(f, \X, k)$
\begin{algorithmic}[1]
\REQUIRE{A submodular function $f$, a ground set $\X$, an integer $k$}
\ENSURE{A set $A_k \subseteq \X$}
  \STATE $A_0 \leftarrow \emptyset$
  \FOR{$i \leftarrow 1$ \textbf{to} $k$}
    \STATE $M_i \leftarrow$ the set of $\min(k,\,|\X \setminus A_{i-1}|)$ elements of $\X \setminus A_{i-1}$ with highest marginal gains $\Delta(u \mid A_{i-1})$
    \STATE \textbf{with} probability $(1 - |M_i|/k)$: $A_i \leftarrow A_{i-1}$
    \STATE \textbf{otherwise} Let $u_i$ be a uniformly random element of $M_i$, and set $A_i \leftarrow A_{i-1} \cup \{u_i\}$
  \ENDFOR
  \RETURN $A_k$
\end{algorithmic}
}
\end{algorithm}
\end{minipage}
\end{center}

\begin{theorem}[Performance of Random Greedy~\citep{buchbinder2014submodular}]\label{thm:random-greedy}
Let $f : 2^{\X} \to \real_{\geq 0}$ be a non-negative submodular function (not necessarily monotone), let $S^*$ be an optimal solution to the cardinality-constrained problem $\max_{|S| \leq k} f(S)$, and let $A_k$ be the output of $\texttt{RandomGreedy}$. Then
\[
\mathbb{E}\bigl[f(A_k)\bigr] \;\geq\; \frac{1}{\mathrm{e}}\, f(S^*).
\]
\end{theorem}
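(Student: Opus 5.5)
The plan is to follow the two-ingredient argument of \citet{buchbinder2014submodular}. The first ingredient is a per-step progress inequality in the style of~\eqref{eq:proof-1e-aux1}. The second is a lower bound on $\mathbb{E}[f(A_i\cup S^*)]$ that accounts for non-monotonicity. Throughout we may assume $|S^*|=k$. Indeed, we can pad $\X$ with $2k$ dummy elements $d$ satisfying $f(S\cup D)=f(S)$ for every set $D$ of dummies. This preserves submodularity and non-negativity, lets us pad $S^*$ to size exactly $k$, and guarantees $|M_i|=k$. Removing dummies from the output does not change its value.

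\emph{Step 1 (progress).} Condition on $A_{i-1}$. The set $M_i$ consists of the $k$ elements of largest marginal gain, so the sum of gains over $M_i$ is at least the sum over any $k$ elements of $\X\setminus A_{i-1}$. Comparing against $S^*\setminus A_{i-1}$, padded with dummies of zero gain, gives
\[
\mathbb{E}[\Delta(u_i\mid A_{i-1})\mid A_{i-1}]=\tfrac1k\sum_{u\in M_i}\Delta(u\mid A_{i-1})\ \ge\ \tfrac1k\sum_{u\in S^*\setminus A_{i-1}}\Delta(u\mid A_{i-1}).
\]
Submodularity then bounds the right-hand side by $\tfrac1k\bigl(f(A_{i-1}\cup S^*)-f(A_{i-1})\bigr)$. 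Taking expectations yields
\[
\mathbb{E}[f(A_i)]-\mathbb{E}[f(A_{i-1})]\ \ge\ \tfrac1k\bigl(\mathbb{E}[f(A_{i-1}\cup S^*)]-\mathbb{E}[f(A_{i-1})]\bigr).
\]

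\emph{Step 2 (main obstacle: controlling $f(A_{i-1}\cup S^*)$).} Without monotonicity we cannot bound $f(A\cup S^*)$ below by $f(S^*)$. Instead we use the following lemma: if $g(S)=f(S\cup S^*)$, which is non-negative and submodular, and $A$ is a random set in which every element appears with probability at most $p$, then $\mathbb{E}[g(A)]\ge(1-p)g(\emptyset)$. I would prove this by sorting the elements by their inclusion probability and telescoping the marginal gains of $g$, using submodularity and non-negativity. This is a Lovász-extension-type argument, where we only need $\mathbb{E}[g(A)]\ge \fL$-style bounds along a chain. Next, at each step, every fixed element enters with probability at most $1/k$, because only $u_i$ is added and it is uniform over $k$ candidates. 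By induction, $\Pr[u\in A_i]\le 1-(1-1/k)^i$. Therefore
\[
\mathbb{E}[f(A_i\cup S^*)]\ \ge\ (1-1/k)^i f(S^*).
\]

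\emph{Step 3 (solve the recursion).} Set $a_i=\mathbb{E}[f(A_i)]$, so that $a_i\ge(1-\tfrac1k)a_{i-1}+\tfrac1k(1-\tfrac1k)^{i-1}f(S^*)$ with $a_0\ge0$. Induction gives $a_i\ge \tfrac ik(1-\tfrac1k)^{i-1}f(S^*)$. At $i=k$ this becomes $a_k\ge(1-\tfrac1k)^{k-1}f(S^*)\ge e^{-1}f(S^*)$, since $(1-1/k)^{k-1}\ge 1/e$. This is the claimed bound. The delicate parts are the sampling lemma in Step~2 and the dummy-element reduction that makes $|M_i|=k$ and the probability bound hold exactly.
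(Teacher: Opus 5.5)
Your Steps 1--3 are the argument of \citet{buchbinder2014submodular}; the survey gives no proof of its own, only the citation. They are correct for the algorithm run on the padded ground set $\X\cup D$, and your sketch of the sampling lemma (order by inclusion probability, telescope, use submodularity and then non-negativity) is fine. The gap is the claim that padding is without loss of generality for $\texttt{RandomGreedy}$ as written in the paper. On $\X\cup D$, the top-$k$ set $M_i$ prefers zero-gain dummies over real elements of negative marginal gain. The paper's pseudocode on $\X$ instead takes the top $\min(k,|\X\setminus A_{i-1}|)$ real elements even when all of them have negative gain, and skips only when fewer than $k$ elements remain. The two runs therefore have different output distributions, so ``removing dummies does not change the value'' does not transfer your bound. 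For the paper's $M_i$, the Step~1 inequality $\sum_{u\in M_i}\Delta(u\mid A_{i-1})\ge\sum_{u\in S^*\setminus A_{i-1}}\Delta(u\mid A_{i-1})$ can fail once negative-gain elements are forced into $M_i$.

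This cannot be repaired, because the statement is false for that pseudocode. Take $k=3$ and $\X=\{s,b_1,\dots,b_4\}$. Let $f(S)=|\{j: b_j\notin S\}|$ if $s\in S$ and $f(S)=0$ otherwise. This is the directed cut of the star $s\to b_j$, hence non-negative and submodular, and $f(S^*)=f(\{s\})=4$.
\begin{itemize}
\item Until $s$ is chosen, it is the only element of positive gain, so it is picked with probability $1/3$ at each step.
\item After $s$ is chosen, every remaining element has gain $-1$. At least three elements remain at every step, so $|M_i|=3$, nothing is skipped, and some $b_j$ is forced in.
\end{itemize}
Thus $f(A_3)=2$ if $s\in A_3$ and $0$ otherwise, giving $\mathbb{E}[f(A_3)]=2\bigl(1-(2/3)^3\bigr)=38/27<4/\mathrm{e}$.

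Your argument does establish the theorem for the algorithm of \citet{buchbinder2014submodular}, where $M_i$ is a size-$k$ set over $\X\cup D$ and the output is $A_k\setminus D$. It also establishes it, without dummies, for the pseudocode with $M_i$ restricted to the at most $k$ elements of largest \emph{non-negative} marginal gain, with skip probability $1-|M_i|/k$. In that version Step~1 holds with no padding at all: a skip contributes zero gain, and each element is still selected with probability at most $1/k$ per step. You should state explicitly which algorithm you are analyzing, rather than treating the padding as a harmless normalization.
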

This $O(nk)$-time algorithm is the simplest result in~\citet{buchbinder2014submodular}; the same paper gives more refined algorithms for the cardinality-constrained case that improve upon $1/\mathrm{e}$, achieving $1/\mathrm{e} + 0.004$ in general and $1/2 - o(1)$ when $k = n/2$. Note that the $1/\mathrm{e}$ bound here applies to the cardinality-constrained non-monotone setting; the unrelated $1/4$ bound for a uniformly random set in Table~\ref{tab:submod-results} applies to the unconstrained setting.


\subsection{Stochastic Greedy for Subsampling}

The algorithms in the previous section evaluate the submodular function over a large number of candidate elements at each step, which becomes prohibitive when $n$ is large. A powerful and recurring idea is \emph{subsampling}: rather than querying $f$ on every candidate, evaluate on a small random subset. Submodularity's diminishing-returns structure makes this safe; a random subsample of the ground set retains a representative portion of the marginal gain information, so the greedily selected element from the sample is nearly as good as the true greedy choice. This idea surfaces in several forms: $\texttt{StochasticGreedy}$ subsamples at each greedy step to eliminate the dependence of the runtime on $k$; the random uniform set provides an $O(n)$ baseline for unconstrained problems; and $\texttt{SampleGreedy}$ uses a random half of $\X$ as a pre-filter before running standard greedy. In each case, randomization trades a small loss in approximation factor for a significant reduction in oracle complexity.


For very large scale applications, running $\texttt{LazyGreedy}$ \citep{minoux1978accelerated} is impractical. The question of whether it can be accelerated further to remove the dependence of the runtime on $k$ (the number of elements to be selected) was resolved in \citep{mirzasoleiman2015lazier}. The authors developed $\texttt{StochasticGreedy}$: an algorithm that uses $O(n \log(1/\epsilon))$ function evaluations with a guarantee of $1-1/e - \epsilon$ for monotone submodular maximization subject to a cardinality constraint. Note that this number of evaluations does not depend on $k$. The core idea is to reduce the size of the ground set (thereby reducing the number of function evaluations) during each greedy step by randomly sampling a subset of the ground set. By carefully selecting the size of the subset (based on $\epsilon$), the authors show that this results in only $\epsilon$ loss in the approximation guarantee. A closely related thresholding scheme that also achieves near-linear running time was introduced earlier by~\citet{badanidiyuru2014fast}.

\begin{center}
\begin{minipage}{9cm}
\begin{algorithm}[H]
\algobox{
\textbf{function} $\texttt{StochasticGreedy}(f, \X, k, \epsilon)$
\begin{algorithmic}[1]
\REQUIRE{A submodular function $f$, a ground set $\X$, an integer $k$, and $\epsilon > 0$}
\ENSURE{A set $S \subseteq \X$ with $|S|=k$ that greedily maximizes $f(S)$}
  \STATE $S \leftarrow \emptyset$
  \FOR{$i \leftarrow 1$ \textbf{to} $k$}
      \STATE $R \leftarrow$ \text{a uniformly random subset of}\ $\X \setminus S$ \text{of size}\ $\min\!\bigl((n/k)\log(1/\epsilon),\, |\X\setminus S|\bigr)$
      \STATE $x \leftarrow \argmax_{\bar{x} \in R} \Delta(\bar{x}|S)$
      \STATE $S := S \cup \{x\}$
  \ENDFOR
  \RETURN $S$
\end{algorithmic}
}
\end{algorithm}
\end{minipage}
\end{center}
\begin{theorem}[Performance of Stochastic Greedy~\citep{mirzasoleiman2015lazier}]\label{thm:stochastic-greedy}
Let $f : 2^{\X} \to \real_{\geq 0}$ be a normalized, monotone submodular function and let $S^*$ be an optimal solution to the cardinality-constrained problem $\max_{|S| \leq k} f(S)$. For any $\epsilon > 0$, $\texttt{StochasticGreedy}$ returns a set $S$ satisfying
\[
\mathbb{E}[f(S)] \;\geq\; \left(1 - \frac{1}{\mathrm{e}} - \epsilon\right) f(S^*)
\]
in $O(n\log(1/\epsilon))$ oracle calls, independent of $k$.
\end{theorem}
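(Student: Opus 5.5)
The plan is to adapt the proof of Theorem~\ref{thm:greedy-cardinality}. The only new ingredient is a probabilistic lower bound on the marginal gain of the element picked from the random sample $R$. Let $S_{t-1}$ be the set after $t-1$ steps, $R_{t-1}=S^*\setminus S_{t-1}$, and $s=(n/k)\log(1/\epsilon)$ the sample size. The target, in analogy with~\eqref{eq:proof-1e-aux1}, is the one-step inequality
\[
\mathbb{E}\bigl[f(S_t)-f(S_{t-1})\mid S_{t-1}\bigr]\;\geq\;\frac{1-\epsilon}{k}\bigl(f(S^*)-f(S_{t-1})\bigr).
\]
Given this, I take total expectations and unroll the recursion exactly as in Theorem~\ref{thm:greedy-cardinality}. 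This gives $\mathbb{E}[f(S_k)]\geq \bigl(1-(1-\tfrac{1-\epsilon}{k})^k\bigr)f(S^*)\geq (1-e^{-(1-\epsilon)})f(S^*)$. Convexity of $e^{x}$ gives $e^{-1+\epsilon}\leq e^{-1}+\epsilon$ for $\epsilon\in(0,1]$, so the bound is at least $(1-1/\mathrm{e}-\epsilon)f(S^*)$. The oracle count is immediate: $k$ rounds of $s$ evaluations each, for $O(n\log(1/\epsilon))$ total.

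\emph{Step 1: the sample hits $R_{t-1}$ with good probability.} The sample $R$ is uniform of size $s$ from $\X\setminus S_{t-1}$, which has at most $n$ elements. Hence
\[
\Pr[R\cap R_{t-1}=\emptyset]\leq \Bigl(1-\tfrac{|R_{t-1}|}{n}\Bigr)^{s}\leq e^{-s|R_{t-1}|/n}=\epsilon^{|R_{t-1}|/k}.
\]
Since $|R_{t-1}|\leq k$, the map $x\mapsto 1-\epsilon^{x/k}$ is concave on $[0,k]$ and vanishes at $0$. Therefore $\Pr[R\cap R_{t-1}\neq\emptyset]\geq (1-\epsilon)\,|R_{t-1}|/k$. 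If the cap $|\X\setminus S_{t-1}|$ binds, then $R$ is everything and the step is deterministic greedy, so the bound holds trivially.

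\emph{Step 2: conditioned on a hit, the chosen gain is at least an average over $R_{t-1}$.} This is the step I expect to be the main obstacle. Conditioned on $R\cap R_{t-1}\neq\emptyset$, the chosen element $x$ has $\Delta(x\mid S_{t-1})\geq \max_{y\in R\cap R_{t-1}}\Delta(y\mid S_{t-1})$. By symmetry of uniform sampling, every element of $R_{t-1}$ is equally likely to lie in $R\cap R_{t-1}$. Picking a uniformly random element of $R\cap R_{t-1}$ therefore yields a uniform element of $R_{t-1}$, and the max dominates it. So the conditional expected gain is at least $\frac{1}{|R_{t-1}|}\sum_{y\in R_{t-1}}\Delta(y\mid S_{t-1})$.

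By monotonicity and submodularity, exactly as in the chain of inequalities in Theorem~\ref{thm:greedy-cardinality}, this average is at least $\frac{1}{|R_{t-1}|}\bigl(f(S^*)-f(S_{t-1})\bigr)$. The case where $R$ misses $R_{t-1}$ contributes a nonnegative gain by monotonicity. Multiplying by the probability from Step~1, the factor $|R_{t-1}|$ cancels and the one-step inequality follows. The care in this step is making the symmetry argument rigorous, since the sample size is fixed rather than each element being included independently. I would handle it with an exchangeability argument conditioned on $|R\cap R_{t-1}|$.
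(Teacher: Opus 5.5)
The paper does not prove this theorem itself; it only cites \citet{mirzasoleiman2015lazier}. Your proposal is correct and is essentially that reference's argument: the concavity bound $\Pr[R\cap R_{t-1}\neq\emptyset]\geq(1-\epsilon)|R_{t-1}|/k$, then the symmetry step reducing the chosen gain to the average marginal gain over $S^*\setminus S_{t-1}$, then the same unrolling as in the proof of Theorem~\ref{thm:greedy-cardinality}. You are also right that Step~2 needs full exchangeability rather than equal inclusion probabilities. Conditioning on $|R\cap R_{t-1}|$, which makes the intersection a uniformly random subset of $R_{t-1}$ of that size, closes it.
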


\subsection{Randomized Algorithms for Unconstrained Maximization}

For unconstrained non-monotone submodular maximization, there is no feasibility constraint on the output set, but the non-monotonicity means marginal gains can be negative, leaving no clear greedy direction. Randomizing the include/exclude decision for each element sidesteps this difficulty and yields provable constant-factor guarantees; we discuss this next.

\subsubsection*{Random Uniform Set}
A simple baseline for unconstrained non-monotone submodular maximization is the \emph{random uniform set}: independently include each element $u \in \X$ with probability $1/2$. Despite its simplicity, this already provides a constant-factor guarantee.

\begin{theorem}[Random Uniform Set~\citep{feige2011maximizing}]\label{thm:random-uniform}
Let $f : 2^{\X} \to \real_{\geq 0}$ be a non-negative submodular function and let $S$ be a uniformly random subset of $\X$ (each element included independently with probability $1/2$). Then
\[
\mathbb{E}[f(S)] \;\geq\; \frac{1}{4}\, f(S^*),
\]
where $S^*$ is any fixed subset of $\X$.
\end{theorem}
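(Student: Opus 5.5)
The plan is the classical Feige--Mirrokni--Vondrák argument. It rests on a sampling lemma that compares a random subset of a fixed set with the endpoints of that set, and then applies the lemma twice, once inside $S^*$ and once inside its complement.

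\emph{Sampling lemma.} Let $g:2^{\X}\to\real$ be submodular, let $A\subseteq\X$, and let $A(p)$ be a random subset of $A$ that contains each element independently with probability $p$. We will show
\[
\mathbb{E}[g(A(p))]\ge (1-p)\,g(\emptyset)+p\,g(A).
\]
To prove it, order $A=\{a_1,\ldots,a_m\}$ and set $A_i=\{a_1,\ldots,a_i\}$. Write
\[
g(A(p))=g(\emptyset)+\sum_i \mathbf{1}[a_i\in A(p)]\;\Delta\bigl(a_i\mid A(p)\cap A_{i-1}\bigr).
\]
Since $A(p)\cap A_{i-1}\subseteq A_{i-1}$, Proposition~\ref{prop:equi-sub-char} gives $\Delta(a_i\mid A(p)\cap A_{i-1})\ge\Delta(a_i\mid A_{i-1})$. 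The inclusion indicator of $a_i$ is independent of $A(p)\cap A_{i-1}$, so taking expectations yields
\[
\mathbb{E}[g(A(p))]\ge g(\emptyset)+p\sum_i\Delta(a_i\mid A_{i-1})=(1-p)g(\emptyset)+p\,g(A).
\]
One subtlety is that the indicator multiplies a random marginal. Independence handles this: we bound the marginal pointwise from below and only then take expectations.

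\emph{Applying the lemma.} Write the uniform random set as $S=R_1\cup R_2$, where $R_1=S\cap S^*$ and $R_2=S\setminus S^*$. These are independent half-samples of $S^*$ and of $\X\setminus S^*$. First condition on $R_2$ and apply the lemma with $A=S^*$, $p=1/2$ and $g(T)=f(T\cup R_2)$. This $g$ is submodular, by the same diminishing-returns check as the residual rule in Section~\ref{sec:submod-operations}. The result is
\[
\mathbb{E}[f(S)\mid R_2]\ge \tfrac12 f(R_2)+\tfrac12 f(S^*\cup R_2).
\]
Next apply the lemma on $A=\X\setminus S^*$ twice. With $g(T)=f(T)$ we get $\mathbb{E}f(R_2)\ge\tfrac12 f(\emptyset)+\tfrac12 f(\X\setminus S^*)\ge \tfrac12 f(\emptyset)$. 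With $g(T)=f(S^*\cup T)$ we get $\mathbb{E}f(S^*\cup R_2)\ge\tfrac12 f(S^*)+\tfrac12 f(\X)$.

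\emph{Combining.} Putting these together gives
\[
\mathbb{E}f(S)\ge \tfrac14\bigl(f(\emptyset)+f(\X\setminus S^*)+f(S^*)+f(\X)\bigr).
\]
Non-negativity of $f$ lets us drop every term except $f(S^*)$, which gives $\mathbb{E}f(S)\ge\tfrac14 f(S^*)$. This holds for an arbitrary fixed $S^*$, as the theorem claims.

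The main obstacle is the nested conditioning. We must check that $R_1$ and $R_2$ are independent, and that each shifted function $T\mapsto f(T\cup R_2)$ or $T\mapsto f(S^*\cup T)$ is submodular on its domain, so that the lemma applies at each stage.
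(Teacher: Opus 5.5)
Your proof is correct and is the Feige--Mirrokni--Vondr\'ak argument. The paper gives no proof of its own, only the citation and a one-line intuition, and your intermediate bound $\tfrac14\bigl(f(\emptyset)+f(\X\setminus S^*)+f(S^*)+f(\X)\bigr)$ is exactly their two-set sampling lemma specialized to $p=q=\tfrac12$. One small remark: independence is not needed inside the sampling lemma, because once you bound the marginal pointwise by the deterministic quantity $\Delta(a_i\mid A_{i-1})$, only $\mathbb{E}\bigl[\mathbf{1}[a_i\in A(p)]\bigr]=p$ is used; independence matters only for the conditioning step that separates $R_1$ from $R_2$.
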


The $1/4$ bound is intuitive: by submodularity, the expected value of a random half of the ground set is at least $1/4$ of the optimum. 

\subsubsection*{RandomizedUSM} 
The \texttt{RandomizedUSM} algorithm of~\citet{buchbinder2015tight} achieves the tight $1/2$ guarantee by running a \emph{double-sided} random process: it simultaneously grows a set $X$ from $\emptyset$ and shrinks a set $Y$ from $\X$, probabilistically including each element based on the balance between marginal gains $a_i$ (gain to $X$) and marginal losses $b_i$ (loss from $Y$).

\begin{center}
\begin{minipage}{9cm}
\begin{algorithm}[H]
\algobox{
\textbf{function} $\texttt{RandomizedUSM}(f, \X)$
\begin{algorithmic}[1]
\REQUIRE{A submodular function $f$, ground set $\X=\{u_1, \dots, u_n\}$ in a fixed order}
\ENSURE{A set $X_n \subseteq \X$}
\STATE {$X_0 \leftarrow \emptyset$, $Y_0 \leftarrow \X$}
\FOR{$i \leftarrow 1$ \textbf{to} $n$}
\STATE $a_i \leftarrow f(X_{i-1} \cup \{u_i\}) - f(X_{i-1})$
\STATE $b_i \leftarrow f(Y_{i-1} \setminus \{u_i\}) - f(Y_{i-1})$
\STATE $a'_i \leftarrow \max\{a_i, 0\}$, $b'_i \leftarrow \max\{b_i, 0\}$
    \IF{$a'_i = 0$ \textbf{and} $b'_i = 0$}
        \STATE $p_i \leftarrow 1$
    \ELSE
        \STATE $p_i \leftarrow a'_i / (a'_i + b'_i)$
    \ENDIF
\STATE \textbf{with} probability $p_i$:
\STATE \hspace{1em} $X_i \leftarrow X_{i-1} \cup \{u_i\}$, $Y_i \leftarrow Y_{i-1}$
\STATE \textbf{else} (with probability $1-p_i$):
\STATE \hspace{1em} $X_i \leftarrow X_{i-1}$, $Y_i \leftarrow Y_{i-1} \setminus \{u_i\}$
\ENDFOR
\RETURN $X_n$
\end{algorithmic}
}
\end{algorithm}
\end{minipage}
\end{center}

\begin{theorem}[Performance of RandomizedUSM~\citep{buchbinder2015tight}]\label{thm:randomized-usm}
Let $f : 2^{\X} \to \real_{\geq 0}$ be a non-negative submodular function. Then $\texttt{RandomizedUSM}$ returns a set $X_n$ satisfying
\[
\mathbb{E}[f(X_n)] \;\geq\; \frac{1}{2}\, f(S^*)
\]
for any $S^* \subseteq \X$. This improves on the $1/3$ guarantee of the deterministic $\texttt{DoubleGreedy}$ algorithm to the tight $1/2$, which is the best factor attainable in the value-oracle model for unconstrained maximization~\citep{feige2011maximizing}; the bound holds in expectation over the algorithm's internal randomness.
\end{theorem}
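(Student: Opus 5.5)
We follow the potential-function argument of~\citet{buchbinder2015tight}. Fix $S^*\subseteq\X$ and define the random ``hybrid'' sets $O_i := (S^*\cup X_i)\cap Y_i$, so that $O_0=S^*$ and $O_n=X_n=Y_n$. Our aim is the per-step inequality
\begin{equation*}
\mathbb{E}\bigl[f(O_{i-1})-f(O_i)\bigr]\;\le\;\tfrac12\,\mathbb{E}\bigl[f(X_i)-f(X_{i-1})+f(Y_i)-f(Y_{i-1})\bigr].
\end{equation*}
Summing over $i=1,\dots,n$ telescopes to $f(S^*)-\mathbb{E}[f(X_n)]\le \tfrac12\bigl(\mathbb{E}[f(X_n)]-f(\emptyset)+\mathbb{E}[f(Y_n)]-f(\X)\bigr)$. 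Since $X_n=Y_n$ and $f\ge 0$, this gives $2\,\mathbb{E}[f(X_n)]\ge f(S^*)$.

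The first step is the basic fact $a_i+b_i\ge 0$. We have $X_{i-1}\subseteq Y_{i-1}\setminus\{u_i\}$ and $u_i\in Y_{i-1}$. Applying Proposition~\ref{prop:equi-sub-char} to the pair $X_{i-1}\subseteq Y_{i-1}\setminus\{u_i\}$ gives $a_i\ge f(Y_{i-1})-f(Y_{i-1}\setminus\{u_i\})=-b_i$. Hence at least one of $a_i,b_i$ is nonnegative.

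Next, we condition on the history up to step $i-1$ and split into three cases.
\begin{itemize}
\item \emph{Case $a_i\ge0$, $b_i\le0$.} Then $p_i=1$. The right side equals $a_i/2\ge0$. The set $O$ either is unchanged (if $u_i\in S^*$) or gains $u_i$. In the latter case $O_{i-1}\subseteq Y_{i-1}\setminus\{u_i\}$, so submodularity gives $f(O_{i-1})-f(O_{i-1}\cup\{u_i\})\le -\bigl(f(Y_{i-1})-f(Y_{i-1}\setminus\{u_i\})\bigr)=b_i\le0$.
\item \emph{Case $a_i<0$, $b_i\ge0$.} This is symmetric: $u_i$ is removed from $Y$, and we compare with $X_{i-1}$.
\item \emph{Case $a_i,b_i>0$.} This is the main case, with $p_i=a_i/(a_i+b_i)$. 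The gain side is $\tfrac12\frac{a_i^2+b_i^2}{a_i+b_i}$. For the loss side, suppose $u_i\notin S^*$. Then $O$ changes only when $u_i$ enters $X$, with probability $p_i$, and the loss is $f(O_{i-1})-f(O_{i-1}\cup\{u_i\})\le b_i$ by diminishing returns, since $O_{i-1}\subseteq Y_{i-1}\setminus\{u_i\}$. Suppose instead $u_i\in S^*$. Then $O$ loses $u_i$ with probability $1-p_i$, and the loss is at most $a_i$ by diminishing returns, since $X_{i-1}\subseteq O_{i-1}\setminus\{u_i\}$. Either way, the expected loss is at most $\frac{a_ib_i}{a_i+b_i}$.
\end{itemize}

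The desired inequality in the main case then reduces to $2a_ib_i\le a_i^2+b_i^2$, which is AM--GM. Taking expectations over the history and summing completes the argument.

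The main obstacle is choosing the right hybrid potential $O_i$ and checking the two submodularity comparisons, i.e.\ the set inclusions $X_{i-1}\subseteq O_{i-1}\setminus\{u_i\}$ and $O_{i-1}\subseteq Y_{i-1}\setminus\{u_i\}$, in each subcase. Once those are in place, the probability $p_i$ is exactly what makes the AM--GM balance work.
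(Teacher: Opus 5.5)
Your proof is correct and is the hybrid-set potential argument of Buchbinder, Feldman, Naor and Schwartz that the paper relies on. The survey gives no proof of its own, only the citation, and the optimality of $1/2$ is likewise only cited from the Feige--Mirrokni--Vondr\'ak query lower bound, which you need not reproduce. The one slip is that your three cases miss the boundary $a_i=0<b_i$, where $p_i=0$; your main-case computation still works there (expected loss $0$, gain $b_i/2$), so state that case as $a_i\ge 0$, $b_i>0$.
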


\subsubsection*{Sample Greedy}

$\texttt{SampleGreedy}$~\citep{harshaw2022power} is a simple randomized algorithm designed for non-monotone submodular maximization subject to a matroid constraint. The key idea is to independently include each element of the ground set with probability $1/2$ to form a random sample $R$, and then run the standard greedy algorithm on $R$ rather than on the full ground set. This random subsampling acts as a form of ``pre-filtering'' that, with constant probability, retains a constant fraction of the optimal solution while discarding elements that would interact poorly with greedy's selections.

\begin{theorem}[Performance of Sample Greedy~\citep{harshaw2022power}]\label{thm:sample-greedy}
Let $f : 2^{\X} \to \real_{\geq 0}$ be a non-negative submodular function, let $\mathcal{I}$ be a matroid constraint, and let $S^*$ be an optimal solution to $\max_{S \in \mathcal{I}} f(S)$. Then $\texttt{SampleGreedy}$ returns a set $S$ satisfying
\[
\mathbb{E}[f(S)] \;\geq\; \frac{1}{4}\, f(S^*).
\]
For $k$ matroids, the guarantee scales as $\frac{k}{(k+1)^2}$. 
\end{theorem}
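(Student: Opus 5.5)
Write $G$ for the greedy output on the random sample $R$, where each element of $\X$ enters $R$ independently with probability $p$ (the theorem takes $p=1/2$; I keep $p$ general until the end). I would prove the statement by charging $\mathbb{E}[f(G)]$ against the optimum in two pieces: the value $\mathbb{E}[f(G\cup S^*)]$, and the marginal gains collected by greedy. The standard tool for non-monotone objectives is the sampling lemma: if every element of a random set $A$ appears with probability at most $q$, then $\mathbb{E}[f(A\cup S^*)]\ge(1-q)f(S^*)$. This follows from non-negativity and submodularity by applying the random-uniform-set argument to $g(A)=f(A\cup S^*)$. Each element of $\X$ lies in $G\subseteq R$ with probability at most $p$, so $\mathbb{E}[f(G\cup S^*)]\ge(1-p)f(S^*)$.

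\textbf{Step 1 (coupling).} Run greedy on all of $\X$, but flip the coin for an element only when greedy first wants to add it. If the coin says the element is not in $R$, discard it and also place it in a set $D$ of rejected elements. This has the same distribution as greedy on $R$. Along the run I maintain $O_t$, a copy of $S^*\setminus D$ modified so that $G_t\cup O_t$ stays independent in the matroid. When greedy adds $x_t$, I delete from $O_t$ either $x_t$ itself or one element $y$ whose removal restores independence; such a $y$ exists by the exchange (augmentation) property. With $k$ matroids, at most $k$ such deletions are needed per step.

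\textbf{Step 2 (charging).} Each deleted $y$ was still addable at step $t$, so greedy's choice gives $\Delta(x_t\mid G_{t-1})\ge\Delta(y\mid G_{t-1})$. Condition on the state just before the coin for $x_t$. With probability $p$ greedy gains $\Delta(x_t\mid G_{t-1})$ and pays for at most $k$ optimal elements. With probability $1-p$ the element is rejected, and it is removed from $O$ only if it belonged to $S^*$. Summing, and using submodularity to bound the loss from removing elements of $O$, I expect
\[
(1-p)\,\mathbb{E}[f(G\cup S^*)]\;\le\;\bigl(1-p+kp\bigr)\,\mathbb{E}[f(G)]
\]
up to the right constants. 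The rejected elements carry no cost for the gain they would have brought, and only the probability $p$ of paying for $k$ exchanges matters. Combining with the sampling lemma gives
\[
\mathbb{E}[f(G)]\ge\frac{p(1-p)}{\,\cdots\,}\,f(S^*),
\]
and optimizing over $p$ should give $\frac{k}{(k+1)^2}$, at $p=\tfrac1{k+1}$ in Harshaw et al.'s parametrization. For $k=1$ this is $p=1/2$ and the factor $1/4$.

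\textbf{Main obstacle.} The bookkeeping in Step 2 is the delicate part. I must show that the expected drop in $f(G_t\cup O_t)$ per step is bounded by a multiple of the expected greedy gain, even though $O_t$ depends on past coins and $f$ can be non-monotone. I would handle this by defining the potential $\Phi_t=f(G_t\cup O_t)$ and bounding $\mathbb{E}[\Phi_{t-1}-\Phi_t]$ conditionally. Submodularity bounds each removal's effect by a marginal gain relative to $G_{t-1}$; this needs care because the sets $O_t$ shrink. Deferring the coin flips in Step 1 is exactly what makes these conditional expectations tractable. If the direct potential fails, I would fall back on citing \citet{harshaw2022power} for this inequality.
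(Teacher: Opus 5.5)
You have the same architecture as the proof in \citet{harshaw2022power}, to which the paper defers without giving a proof: deferred coin flips, an exchange-maintained copy $O_t$ of $S^*$, charging deleted elements to greedy's choice, the sampling bound $\mathbb{E}[f(G\cup S^*)]\ge(1-p)f(S^*)$, and $p=1/(k+1)$. The gap is that the inequality carrying all the weight is never established, and the version you sketch would not give the stated constants. Your display $(1-p)\,\mathbb{E}[f(G\cup S^*)]\le(1-p+kp)\,\mathbb{E}[f(G)]$ is false. As $p\to 0$, $G=\emptyset$ with probability tending to one, so it would force $f(S^*)\le f(\emptyset)$.

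More importantly, the potential $\Phi_t=f(G_t\cup O_t)$ belongs to the monotone argument and does not carry over. Suppose $x_t\notin O_{t-1}$ is accepted with deletion set $Y_t$, and let $B=G_{t-1}\cup O_{t-1}$. Then $\Phi_{t-1}-\Phi_t=\Delta(Y_t\mid B\setminus Y_t)-\Delta(x_t\mid B\setminus Y_t)$. The first term is at most $k\,\Delta(x_t\mid G_{t-1})$. But for non-monotone $f$ the marginal $\Delta(x_t\mid B\setminus Y_t)$ can be arbitrarily negative even though $\Delta(x_t\mid G_{t-1})>0$, and greedy's choice gives no control over it.

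The repair is to compare only at termination, using $f(G\cup S^*)\le f(G)+\sum_{o\in S^*\setminus G}\Delta(o\mid G)$. Charge each $o$ at the step $t$ at which it leaves $O$, via $\Delta(o\mid G)\le\Delta(o\mid G_{t-1})\le g_t:=\Delta(x_t\mid G_{t-1})$. An $o$ that is never removed is feasible and unexamined at the end, so $\Delta(o\mid G)\le 0$, provided the inner greedy stops once no feasible element has positive gain. That stopping rule also guarantees $g_t>0$, which you need for $\sum_{y\in Y_t}\Delta(y\mid G_{t-1})\le k g_t$.

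The idea you are missing is that the two costs are mutually exclusive:
\begin{itemize}
\item if $x_t\in O_{t-1}$, acceptance costs nothing and rejection (probability $1-p$) costs $g_t$;
\item if $x_t\notin O_{t-1}$, rejection costs nothing and acceptance (probability $p$) costs at most $k g_t$.
\end{itemize}
The expected charge is therefore at most $\max\{kp,1-p\}\,g_t$, against an expected gain of $p\,g_t$. Summing, with $f(\emptyset)\ge 0$, gives $p\,\mathbb{E}[f(G\cup S^*)]\le(p+\max\{kp,1-p\})\,\mathbb{E}[f(G)]$. At $p=1/(k+1)$ the two branches balance, so $\mathbb{E}[f(G\cup S^*)]\le(k+1)\,\mathbb{E}[f(G)]$. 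The sampling bound then yields $\mathbb{E}[f(G)]\ge\frac{k}{(k+1)^2}f(S^*)$. If you add the two costs instead, as your $1-p+kp$ suggests, you only get $\frac{p(1-p)}{1+kp}f(S^*)$, which for $k=1$ never exceeds $3-2\sqrt{2}<\frac14$.

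Finally, the events $\{x\in G\}$ are correlated. The sampling bound therefore needs the lemma of \citet{buchbinder2014submodular} for random sets whose marginals are at most $p$, proved via the Lov\'{a}sz extension. The independent-sampling argument behind Theorem~\ref{thm:random-uniform} does not apply.
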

While this does not match the best approximation factors for monotone objectives, $\texttt{SampleGreedy}$ is significantly simpler to analyze and implement than local search or continuous greedy methods, making it attractive for practical large-scale problems.

\subsection{Algorithmic extensions and variants}
The algorithms presented in this section address the standard offline setting where the full ground set $\mathcal{X}$ and oracle access to $f$ are available throughout. Several important variants relax these assumptions.

Online settings, where items must be irrevocably assigned as they arrive, have been studied extensively for submodular welfare maximization~\citep{lesage2024online, MR3818335}. A related streaming variant~\citep{gomes2010budgeted,badanidiyuru2014streaming} retains only a fraction of a data stream: a threshold-based strategy achieves a provable $(\frac{1}{2}-\epsilon)$ guarantee and has been applied to SLAM-type algorithms~\citep{thorne2024submodular}. Other work considers submodular maximization with limited oracle access~\citep{downie2022submodular} or subject to monotone decreasing set function constraints~\citep{ye2023maximization}, as well as distributed submodular minimization~\citep{testa2018distributed,jaleel2019distributed}. Submodular structure has also been used to reduce the computational cost of chance-constrained programs through a partitioning argument on the constraint set~\citep{frick2019exploiting}.

Extensions to the core definition of submodularity include \emph{adaptive submodularity}~\citep{golovin2011adaptive, maillet2013dynamic, javdani2013efficient, kim2016planning, ellis2024generalized, luan2017fast}, which generalizes diminishing returns to sequential, policy-based selection under uncertainty, and \emph{string submodularity}~\citep{zhang2012submodularity,zhang2013near,zhang2016near,liu2015bounding,van2023improved}, which extends the theory to ordered sequences. The notion of \emph{approximate submodularity} (or weak submodularity) relaxes the diminishing-returns condition and still yields approximation guarantees with degraded constants~\citep{chamon2020approximate, chamon2021approximately, silva2019model}.

\section{Applications}\label{sec:applications}

Table~\ref{tab:applications_taxonomy} summarizes the applications covered in this section, organized by canonical objective, monotonicity of the objective, the most common constraint encountered in that application area, and the worst-case greedy guarantee from Section~\ref{sec:algorithms} that typically applies. The goal is to give the reader a compact map of where the theorems of Section~\ref{sec:algorithms} land in practice before the per-subsection treatments that follow.

\begin{table*}[t]
    \renewcommand{\arraystretch}{1.25}
    \centering
    \footnotesize
    \caption{Taxonomy of submodular structure across the applications of Section~\ref{sec:applications}. ``Monotone?'' indicates whether the canonical objective is monotone; ``Constraint'' lists the most common form. The rightmost column points to the greedy guarantee from Section~\ref{sec:algorithms} that typically applies.}
    \label{tab:applications_taxonomy}
    \begin{tabular}{@{}lllll@{}}
        \toprule
        \textbf{Application area} & \textbf{Canonical objective} & \textbf{Monotone?} & \textbf{Constraint} & \textbf{Typical guarantee} \\
        \midrule
        Sensor scheduling            & $\log\det$ of KF info / Gramian       & often \cmark     & Cardinality, matroid             & Thms~\ref{thm:greedy-cardinality},~\ref{thm:greedy-matroid} \\
        Multi-agent systems          & Coverage, detection probability        & \cmark           & Cardinality, matroid             & Thms~\ref{thm:greedy-cardinality},~\ref{thm:greedy-matroid} \\
        Robust submodular opt.\      & Worst-case-removed / CVaR objective    & \cmark           & Cardinality (with $\tau$-robustness) & Robust variant of Thm~\ref{thm:greedy-cardinality} \\
        Leader--follower systems     & Controllability fraction / influence   & \cmark           & Cardinality                      & Thm~\ref{thm:greedy-cardinality} \\
        Distributed submodular opt.\ & Partitioned submodular function        & \cmark           & Cardinality, partition matroid   & Thms~\ref{thm:greedy-cardinality},~\ref{thm:greedy-matroid} \\
        Game theory                  & Player utility functions               & mixed            & Various                          & Equilibrium existence \\
        System theory                & Gramian log-determinant / rank         & \cmark           & Matroid, knapsack                & Thms~\ref{thm:greedy-matroid},~\ref{thm:greedy-knapsack} \\
        Resource allocation          & Welfare (coverage / utility)           & \cmark           & Cardinality, matroid             & Thms~\ref{thm:greedy-cardinality},~\ref{thm:greedy-matroid} \\
        Social networks / opinions   & Influence spread, consensus error      & \cmark           & Cardinality                      & Thm~\ref{thm:greedy-cardinality} \\
        Informative path planning    & Information gain / mutual information  & \cmark           & Path length, matroid             & Thm~\ref{thm:greedy-matroid} \\
        \bottomrule
    \end{tabular}
\end{table*}

\subsection{Sensor scheduling}\label{subsec:sensor-scheduling}

    A fundamental challenge in networked control and estimation systems is the sensor scheduling problem, which involves strategically selecting a subset of sensors to activate in order to optimize a submodular objective, such as maximizing observability or minimizing overall estimation error in dynamical systems. This notion of diminishing returns is natural, where the marginal benefit of adding a new sensor decreases as the number of already-selected sensors grows. A methodological alternative to the greedy submodular approach is convex relaxation, where the binary selection is relaxed and a log-determinant (D-optimal) objective is optimized \citep{joshi2008sensor}. In most cases, the objective is also monotone: the utility of an additional sensor is non-negative. Consequently, a significant body of research has focused on identifying and exploiting submodularity across a range of sensor scheduling applications, from state estimation and controllability to network security and data collection. A summary of key research directions is presented in Table~\ref{tab:sensor_summary}.

    A core finding in this area is that key performance metrics, which are typically difficult to optimize, exhibit a submodular structure. For instance, \citet{cortesi2014submodularity} and \citet{summers2015submodularity} demonstrated that various functions of the controllability Gramian, such as its log determinant and rank, are submodular. This enables the efficient selection of actuators to optimize system performance, including cardinality-constrained selection subject to minimum control effort \citep{tzoumas2015minimal} (Theorem~\ref{thm:greedy-cardinality}) and actuator placement to ensure structural controllability, which is modeled as a matroid constraint \citep{guo2019actuator, guo2021actuator} (Theorem~\ref{thm:greedy-matroid}). The submodularity property also governs the selection of inputs to achieve synchronization in complex networks \citep{sahabandu2020submodular}.

    Similarly, in state estimation, researchers have established conditions under which objectives related to the Kalman Filter (KF) error covariance matrix are submodular. For linear systems, the log determinant of the KF information matrix is submodular \citep{tzoumas2016sensor}. In \citet{jawaid2015submodularity} a broader set of sufficient conditions for submodularity for objectives involving the KF error covariance is provided. While the trace of the one-step KF error covariance is only weakly submodular \citep{hashemi2020randomized}, a greedy algorithm can still provide provable guarantees of the form $(1-e^{-\gamma})$ via the submodularity ratio $\gamma$, a weak-submodularity analogue of Theorem~\ref{thm:greedy-cardinality}. The situation for the steady-state KF error covariance is more complex, with \citet{zhang2017sensor} providing negative results on its submodularity, even though greedy approaches often perform well in practice. In related work, \citet{hartman2019near} specify conditions under which the mean-squared error (MSE) is supermodular for determining sensor sampling frequencies, similar to \citet{MR4208548} who show conditions for its supermodularity in a scheduling context. \citet{tzoumas2016near} similarly show that the log determinant of the expected squared error in batch state estimation is supermodular.

    Beyond these foundational problems, submodular optimization has been adapted to address more complex and realistic scenarios. The field of robust and resilient optimization leverages submodularity (see Section~\ref{subsec:robust-opt} for an in-depth discussion) to counter adversarial actions, such as when sensors are removed by an adversary \citep{laszka2015resilient}. This has led to the formulation of problems as robust sequential submodular optimization \citep{tzoumas2020robust}. In distributed systems (see Section~\ref{subsec:distributed-opt} for an in-depth discussion), communication constraints and network topology pose significant challenges. \citet{hashemi2018near} use weak submodularity to provide guarantees for a distributed greedy algorithm to minimize estimation error in networks where nodes exchange measurements, while \citet{grimsman2022impact} explore distributed selection using message passing. For large-scale systems, randomized greedy algorithms offer scalability, as demonstrated by \citet{hibbard2023randomized} for sensor selection ($\texttt{StochasticGreedy}$, Theorem~\ref{thm:stochastic-greedy}). The framework also supports the co-design of sensing and control, with \citet{tzoumas2020lqg} showing that LQG control and sensing can be decoupled with an approximately supermodular objective, and \citet{nishida2024sparsity} giving an analogous approximate-supermodularity greedy guarantee for the sparsity-constrained LQR (regulation) problem.
    \citet{abou2022optimizing} also show that distributed sensing in a communication constrained setting can be posed as a submodular optimization problem.

    The versatility of this approach is evidenced by its application across diverse domains and problem types. Submodularity has been used for simultaneous placement and activation of wireless sensors to monitor spatial phenomena \citep{krause2011simultaneous} and for the optimal placement of sensors to increase a robust security index in networked control systems \citep{milovsevic2020actuator}. It also finds use in highly specific applications like sensor selection for biological fractional order systems \citep{tzoumas2018selecting} and for observability in non-linear systems \citep{kazma2023state}. The approach has been extended to problems involving quadratic observation systems \citep{ghasemi2019submodularity} and to determine sensor placement based on a structural observability index \citep{bopardikar2021randomized}. In other areas, submodular optimization has been used for utility maximization in wireless sensor networks \citep{zheng2014submodular}, for antenna selection in beamforming \citep{anevlavis2020beam}, for detecting link failures \citep{rahimian2014detection}, and for sensor placement in traffic networks \citep{mehr2018submodular}. The framework's utility is further shown in applications ranging from detecting binary random variables \citep{hespanha2020optimal} and tracking targets \citep{zhou2019sensor}, to solving problems in robotics \citep{liu2018optimal} and optimal placement for energy storage grids \citep{bucciarelli2020greedy} and battery packs \citep{wolf2012optimizing}. Researchers have also applied submodular optimization to problems with unknown communication channel statistics \citep{wu2019learning} and for sensor configuration in Gaussian processes where the metric is submodular but not directly related to the posterior estimate \citep{laurent2023near}. Finally, \citet{perelman2016sensor} formulate sensor placement for detecting pipe failures as a minimum test cover problem, which reduces to set cover and is thus amenable to greedy submodular optimization, while \citet{ye2019sensor} use the submodularity ratio to provide guarantees for hypothesis testing.

    Across these applications, sensor scheduling problems most commonly carry cardinality or matroid constraints on the number of active sensors, making the greedy algorithm (Theorems~\ref{thm:greedy-cardinality} and~\ref{thm:greedy-matroid}) the primary practical tool. Several open questions are specific to this area. The gap between one-step Kalman filter formulations, for which weak submodularity is established via the submodularity ratio~\citep{hashemi2020randomized}, and the steady-state error covariance, for which standard submodularity is known to fail~\citep{zhang2017sensor}, is not yet bridged; identifying a usable instance-dependent parameter for the steady-state objective would extend formal guarantees to the longer-horizon policies that arise in practice. Co-design of sensing and control has so far been approached through approximate supermodularity of LQG objectives~\citep{tzoumas2020lqg}, with related approximate-supermodularity guarantees for the sparsity-constrained LQR problem~\citep{nishida2024sparsity}, and the conditions under which joint design remains tractable beyond LQG are not well understood. A separate question concerns scaling: online and streaming regimes, in which the schedule must be updated at the system sampling rate rather than recomputed from scratch, call for analogues of $\texttt{StochasticGreedy}$ (Theorem~\ref{thm:stochastic-greedy}) with explicit guarantees under time-varying sensor and noise models.
    
    \begin{table*}[ht!] 
        \scriptsize
        \centering
        \caption{Selected representative work on submodularity in sensor scheduling, grouped by problem domain, system model, objective, and constraint type. The table is not exhaustive; further references are discussed throughout the section.}
        \label{tab:sensor_summary}
        \renewcommand{\arraystretch}{1.3} 
        \resizebox{\textwidth}{!}{%
        \begin{tabular}{@{}llll>{\raggedright\arraybackslash}p{5.5cm}@{}}
            \toprule
            \textbf{Domain} & \textbf{System Model} & \textbf{Objective} & \textbf{Constraints} & \textbf{References}\\
            \midrule
            State Estimation & Linear/Gaussian Process & KF/estimator error covariance & Cardinality & \citep{shamaiah2010greedy,krause2011simultaneous,tzoumas2016near,tzoumas2016sensor,tzoumas2018selecting,jawaid2015submodularity}; \citep{hashemi2020randomized,chamon2020approximate,hartman2019near,hibbard2023randomized,MR4208548}\\
            State Estimation & Linear & KF error covariance & Matroid & \citep{chamon2019matroid,chamon2021approximately}\\
            State Estimation & Linear/Gaussian Process & Resiliency/Robustness & Cardinality & \citep{tzoumas2020robust,laszka2015resilient,de2021bilinear}\\
            State Estimation & Non-linear & Mutual Information & Cardinality & \citep{park2018adaptive,tzoumas2017scheduling}\\
            State Estimation & Non-linear & EKF Error Covariance & Cardinality & \citep{ghasemi2019submodularity}\\
            Distributed Estimation & Linear & KF Error covariance & Cardinality & \citep{hashemi2018near}\\
            Distributed Estimation & Linear & Fisher Information & Partition Matroid & \citep{grimsman2022impact}\\
            Controllability/Observability & Linear & Gramian & Cardinality & \citep{cortesi2014submodularity,summers2015submodularity,bopardikar2021randomized,elsherif2024control}\\
            Actuator Placement & Linear & Controllability metrics & Matroid/Knapsack & \citep{tzoumas2015minimal,guo2019actuator,guo2020actuator,guo2021actuator,kyriakis2020actuator}\\
            Controllability/Observability & Non-linear & Uniform Observability/Variational Gramian & Cardinality & \citep{kazma2023state,kazma2024observability,kazma2024observability2}\\
            \bottomrule
        \end{tabular}
        }
    \end{table*}

\subsection{Multi-agent systems}

Multi-agent systems present significant challenges in coordination and resource allocation, particularly in scenarios such as environmental monitoring, search and rescue, and surveillance. A unifying theme in addressing these challenges is the use of submodular optimization, which provides a powerful framework for modeling and solving complex decision-making problems with provable performance guarantees.

In multi-agent coverage problems, the goal is often to position agents to maximize the detection of events or to gain information about an environment. \citet{sun2019exploiting} demonstrated that the objective for the initial agent location in multi-coverage is a monotone submodular function, a property that is crucial for solving the optimal coverage problem in environments with obstacles (subject to a cardinality constraint on the number of deployed agents, the greedy algorithm provides a $(1-1/e)$ guarantee; see Theorem~\ref{thm:greedy-cardinality}). This idea was extended to the optimal composition of a heterogeneous multi-agent robot team, where \citet{sun2020optimal} exploited the submodularity and curvature of the coverage objective to initialize a continuous optimization algorithm. \citet{welikala2022new} introduced a new curvature measure, called extended greedy curvature, which can be computed in parallel with the execution of a greedy algorithm, providing a performance bound in real-time. In a different take on coverage, the risk-sensitive coverage problem, where agents traverse a graph with risks of failure on edges, was shown to be equivalent to solving a submodular set cover problem aimed at finding the minimum set of paths that ensure robot survival \citep{jorgensen2017risk}.

A closely related area is multi-agent search, where the objective is to maximize the probability of detecting a hidden target. The problem of determining the set of locations that maximize this detection probability, particularly when the probability depends on both the agent and the location, can be formulated as a submodular optimization problem over a matroid \citep{ding2017multi, ding2018moving} (Theorem~\ref{thm:greedy-matroid}). This framework has also been applied to dynamic scenarios, with \citet{hollinger2009efficient} showing that placing searchers at the next time step to search for a moving target is a submodular problem that can be solved via a greedy algorithm. More recent works have addressed the complexities of search with constrained agent motion. \citet{li2024computation} formulated target search as a submodular coverage problem subject to routing constraints, leveraging the properties of the greedy algorithm to provide approximation guarantees. This has been further generalized to a submodular maximization problem with an intersection system constraint \citep{li2024multi}. The search and tracking problem has also been tackled in a decentralized manner, where \citet{miki2018multi} showed how a well-designed submodular reward function can be used in a decentralized sequential greedy algorithm to achieve good results.

While the greedy algorithm is a common solution, its performance can degrade under certain constraints. \citet{shi2021communication} investigated a problem of maximizing a submodular objective in a step-wise fashion while maintaining connectivity in the communication graph for a multi-robot team. They showed that in this setting, a greedy approach can be arbitrarily bad, necessitating the development of a heuristic-based algorithm. In a different vein, a recent direction has been to study distributed motion planning in multi-agent systems with a discrete state space through a submodular minimization lens, providing an alternative to the more common maximization formulations \citep{jaleel2019distributed}.

Further applications in multi-agent systems include multi-robot path planning, where submodular functions quantify sensing quality and yield algorithms robust to adversarial degradation of a robot's sensing ability~\citep{schlotfeldt2021resilient,zhou2022distributed,zhou2023robust}. Risk-aware multi-robot coordination, where the objective is the conditional value at risk (CVaR), has also been formulated as a submodular maximization problem, with sequential greedy algorithms applied to sensor placement, task allocation, and combinatorial auctions~\citep{zhou2022risk}. Submodular objectives arise further in task allocation~\citep{qu2019distributed,johnson2012allowing,prasad2022policies,williams2017decentralized,tihanyi2023multi,segui2015decentralised} and persistent monitoring with heterogeneous agents~\citep{rezazadeh2021sub}.

Multi-agent coverage and search objectives most commonly carry cardinality or matroid constraints, so the greedy algorithm (Theorems~\ref{thm:greedy-cardinality} and~\ref{thm:greedy-matroid}) provides the primary theoretical foundation. Several open questions are particular to the multi-agent setting. When robots can reposition between decisions, the feasible set is no longer a fixed matroid but evolves with the team's trajectory; the routing-constrained coverage formulations of~\citet{li2024computation,li2024multi} treat this in special cases, but a unifying characterization that recovers matroid-style guarantees under motion coupling is open. A second question concerns connectivity-constrained coordination, where greedy can be arbitrarily bad~\citep{shi2021communication}: whether weakly submodular surrogates with provable guarantees exist for the connectivity-preserving variant remains unresolved. Heterogeneous teams, as studied in \citet{sun2020optimal}, introduce a further dependence, since the curvature of the coverage objective depends on the composition of agent capabilities in non-trivial ways, and instance-dependent guarantees that account for this composition would tighten current worst-case bounds in practice.
 
\subsection{Robust submodular optimization} \label{subsec:robust-opt}

Robust submodular optimization addresses the problem of optimizing the selection of a subset of elements from a larger ground set under conditions of uncertainty, failures, or adversarial attacks. This often involves maximizing a submodular set function, frequently assumed to be monotone and normalized, to achieve a desired system performance, but with the critical addition of considering \emph{worst-case scenarios} (e.g., denial-of-service attacks, element deletions, or sensor failures) or incorporating \emph{risk-aware measures} like Conditional Value at Risk (CVaR). The selection process can be a single, static choice, or involve multiple adaptive steps over time, where decisions at each stage account for past events and potential future compromises. Robust or resilient optimization strategies are designed in the literature, aiming to guarantee performance despite unpredictable adverse conditions. These often include constraints such as cardinality limits or more general matroid structures.

For clarity, it is worth detailing one of these constructions here. In the context of the classical cardinality-constrained monotone submodular function maximization problem,~\citet{JBO-ASS-RU:16} introduces a robust formulation against the adversarial removal of elements, building on the earlier robust submodular optimization of~\citet{JMLR:v9:krause08b} (which addressed the worst case over a collection of objectives). This robust formulation, which is central to much of the work in this area, aims to solve
\begin{equation}\label{eq:problem1}
\max_{A\subseteq \X, |A|\leq k} \min_{Z\subseteq A, |Z|\leq \tau} f(A\setminus Z),
\end{equation}
Here, $\X$ represents the ground set, $ f $ is a monotone submodular function over $ \X $,
$k$ is the cardinality constraint on the chosen set $A$, and $\tau$ is the robustness parameter, controlling the maximum number of elements ($Z$) that an adversary can remove from $A$. When $\tau=0$, this robust problem reduces to the standard cardinality-constrained problem.
The results of~\citet{JBO-ASS-RU:16}
provide the first constant factor approximation for the robust formulation considering adversarial removal of up to $\tau$ elements. In particular, among other results, a randomized algorithm is proposed which achieves
$(1 - 1/e) - \epsilon$ approximation for constant $\tau$, and extensions to more complex cardinality constraints are studied. 

Robust submodular maximization has since been used in engineering contexts, and in particular in decision and control. For example,~\citet{tzoumas2017resilient} studies a robust version of submodular maximization where the objective is to find a subset of elements resilient to the worst-case removal of a fraction of its elements, by considering curvature-type properties. The work~\citet{tzoumas2018resilient} extends this idea to the sequential version over a finite horizon (though the results hold for non-submodular set functions too). The works~\citet{hou2019robust} and~\citet{hou2021robust} study robust submodular maximization when the functions are correlated and develop a modified greedy algorithm that uses the \textit{correlation ratio}, which measures the degree to which a single element can simultaneously maximize the marginal gains across a collection of submodular functions, to obtain approximation factors. Finally,~\citet{GS-LZ-PT:23} studies multiple-path planning in adversarial environments by also formulating a minimax version of the submodular maximization problem. Robust submodular optimization is also studied in the context of Conditional-Value-at-Risk, for example in~\citet{zhou2022risk}, with applications to mobility-on-demand, where sequential greedy algorithms are used for maximization problems under uncertainty.

Notable open problems in submodular optimization, particularly concerning robust and resilient variants, revolve around improving approximation guarantees and understanding their fundamental limits. A key challenge is determining if a constant factor approximation is achievable, or if an inapproximability result exists, when the number of adversarial removals ($\tau$) is proportional to the chosen set size ($k$); a non-uniform partitioning approach achieves a constant-factor approximation up to $\tau = o(k)$~\citep{bogunovic2017robust}, leaving the linear regime $\tau \propto k$ open. Furthermore, while a randomized $(1-1/e)-\epsilon$ approximation exists for constant $\tau$ in robust monotone submodular function maximization, it is conjectured that this could be made deterministic~\citep{JBO-ASS-RU:16}.
It is also worth pointing out that execution order can critically affect the performance of greedy methods under limited information, and designing favorable agent/order schedules can yield substantial gains in robustness~\citep{konda2022execution}. Finally, we believe that the potential impacts of robust submodular optimization are not fully utilized in the context of decision and control, particularly in scenarios with more complex physical constraints, e.g., in robotics applications with information constraints, or subject to decentralization. 

\subsection{Leader-follower systems}

In multi-agent systems, a broad class of problems involves leader-follower dynamics, where a designated set of leader nodes dictates the behavior of the remaining follower nodes. In such systems, the followers' states are often modeled as a linear average of their neighbors' states, a linear agreement protocol. Inputs from the leaders propagate through this protocol, influencing the dynamics of the entire system. The central challenge is a combinatorial optimization problem: selecting a subset of leaders to maximize a system-level performance objective, typically subject to a cardinality constraint.

A significant body of research has established that the objective functions for leader selection problems often exhibit a submodular structure, consolidated in the monograph of~\citet{clark2016submodularity}. \citet{clark2011submodular} provided a foundational result, showing that selecting a set of $k$ leaders to maximize system performance in various multi-agent networks is a submodular optimization problem (subject to a cardinality constraint; the greedy algorithm provides the canonical $(1-1/e)$ guarantee, Theorem~\ref{thm:greedy-cardinality}). A subsequent work \citet{clark2012leader} further formalized this by demonstrating that the fraction of nodes in the network that are controllable by a given leader set is a submodular function. This framework has also been applied to specific multi-agent behaviors, such as ensuring the translation and scaling of a swarm formation, which was shown to be a monotone submodular maximization problem \citep{schoof2015efficient}. \citet{mai2018optimizing} addresses how a leader can best influence a network by selecting a limited number of direct followers. The authors' main technical contribution is proving that the objective function measuring the leader's influence is supermodular. 

Conversely, a different class of problems involves minimizing an undesirable system metric, such as error. In a leader-follower setup where leaders act as controllers and followers as the plant, it was shown that the limiting error of the followers' states is a non-increasing supermodular function of the leader set \citep{clark2013supermodular}. Similarly, \citet{clark2014minimizing} demonstrated that an upper bound on the convergence error of the follower agents' states is also a supermodular function. Likewise, for consensus networks with noise-corrupted leaders, the total steady-state variance of the node states is a supermodular function of the leader set, so that the variance reduction obtained by adding leaders is submodular and admits the standard greedy guarantees \citep{mackin2018submodular}.

More advanced leader selection problems incorporate additional complexities of real-world networks. \citet{clark2017submodular} address networks with negative edges and provide conditions under which the constraints in the leader selection problem, modeled as a submodular covering constraint, exhibit submodularity. Furthermore, leader selection to ensure consensus in multi-agent systems with switching topologies, where the network structure changes over time, is addressed by \citet{chen2022leader}.

The leader selection problems surveyed here typically reduce to cardinality-constrained monotone submodular maximization, for which the greedy algorithm achieves a $(1-1/e)$ guarantee (Theorem~\ref{thm:greedy-cardinality}). Several open questions are tied to the structure of the agreement protocol. Most existing results take the network topology and edge weights as given and select only the leader set, but the leader set, edge weights, and protocol gains jointly determine the closed-loop convergence behavior of the followers, and the tractable boundaries of this joint design problem are not well understood. The known positive results center on linear agreement dynamics and small variations of it; whether the corresponding objectives remain (weakly) submodular under nonlinear consensus protocols, communication delays, or actuator saturation is largely open. Finally, the switching-topology results of~\citet{chen2022leader} characterize average-case behavior, but worst-case guarantees under adversarial edge removal or leader compromise, connecting to the framework of Section~\ref{subsec:robust-opt}, would close a gap relative to the static-topology theory.

\subsection{Distributed submodular optimization} \label{subsec:distributed-opt}

In \emph{distributed submodular function maximization}, a group of individuals seeks to maximize a (non-negative) submodular set function over a ground set \(\X\), subject to constraints such as cardinality or partition matroids that limit each individual's access to information about the objective.

The work in this area was originally motivated by decentralization in scenarios with a large ground set. To cope with this,~\citet{mirzasoleiman2016distributed}  proposed \textsc{GreeDi}, a two-stage MapReduce protocol, where data is split across \(m\) workers; each runs $\texttt{Greedy}$ (Section~\ref{sec:algorithms}) locally, the partial solutions are merged, and $\texttt{Greedy}$ is applied once more.  Under mild regularity assumptions, e.g., Lipschitz continuity and decomposability of the objective function, \textsc{GreeDi} nearly matches centralized performance and extends to non-monotone objectives, as well as matroid and knapsack constraints.

Unlike the setting described in~\citet{mirzasoleiman2016distributed}, fully distributed models capture limited information through a directed acyclic \emph{information graph} \(G=(V,E)\)~\citep{gharesifard2017distributed}; acyclicity lets the agents act in a fixed order consistent with \(G\), so that when agent \(i\) acts, its in-neighbors have already committed to their choices. In particular, each agent \(i\) chooses an element \(x_i\in X_i\) based solely on the actions \(X_{\mathrm{in}}(i)=\{x_j\,|\,j\in N(i)\}\) of its in-neighbors \(N(i)=\{j\in V\mid(j,i)\in E\}\), i.e.,
agent $i$ chooses its strategy $x_i$ to maximize its marginal reward relative to its limited information $X_{\mathrm{in}}(i)$:
\[
x_i = \argmax_{x\in X_i}\Delta\big(x\;|\; X_{\mathrm{in}}(i)\big).
\]
A small instance is illustrated in Figure~\ref{fig:seqgreedy}.

\begin{figure}[htb]
    \centering
    \begin{tikzpicture}[>=stealth, thick]
        \node[main node] (1) at (0,0)      {$1$};
        \node[main node] (2) at (-1.5,-1.3) {$2$};
        \node[main node] (3) at (1.5,-1.3)  {$3$};
        \node[main node] (4) at (-2.0,-2.6) {$4$};
        \node[main node] (5) at (0.5,-2.6)  {$5$};
        \path[draw,->]
            (1) edge (2)
            (1) edge (3)
            (2) edge (4)
            (2) edge (5)
            (3) edge (5);
    \end{tikzpicture}
    \caption{An information graph $G=(V,E)$ for the distributed sequential-greedy framework of~\citet{gharesifard2017distributed}. A directed edge $j \to i$ encodes that agent $j$ is an in-neighbor of $i$; agent $i$ then selects $x_i = \argmax_{x \in X_i} \Delta\big(x \mid X_{\mathrm{in}}(i)\big)$ using only the choices of its in-neighbors. Here $X_{\mathrm{in}}(1) = \emptyset$, $X_{\mathrm{in}}(2) = X_{\mathrm{in}}(3) = \{x_1\}$, $X_{\mathrm{in}}(4) = \{x_2\}$, and $X_{\mathrm{in}}(5) = \{x_2, x_3\}$. The competitive ratio of the resulting scheme is bounded in terms of graph parameters such as the clique number, chromatic number, and (fractional) independence number of $G$.}
    \label{fig:seqgreedy}
\end{figure}

Lower bounds on the performance of greedy algorithms for distributed submodular maximization that depend on the clique number of the information graph, as well as upper bounds in terms of its chromatic number, are provided in~\citet{gharesifard2017distributed}.

It is worth pointing out that the monotonicity assumption plays a key role in the core cardinality-constrained guarantees of both~\citep{mirzasoleiman2016distributed} and~\citep{gharesifard2017distributed}; in the context of the former, more recent results in~\citet{kazemi2021regularized} consider the class of regularized submodular functions, which are in the form of difference between a  monotone submodular function and a modular penalty function; this type of problem is of importance, as the penalty term has applications in data science settings where over-fitting is an issue. 

Back to the setting in~\citet{gharesifard2017distributed}, in subsequent work for example~\citet{grimsman2018impact}, the performance of sequential-greedy schemes is measured by the competitive ratio
\[
\rho(G)=\inf_{f,X}\frac{f\!\bigl(x_{\mathrm{sol}}(f,X,G)\bigr)}{f\!\bigl(x_{\mathrm{opt}}(f,X)\bigr)},
\]
which can be bounded via graph parameters, for example in terms of the (fractional) independence number.  
Subsequent work in~\citep{grimsman2020cost,grimsman2022valid,sun2020distributed,grimsman2020impact,ye2020distributed} has deepened understanding and widened the applicability of distributed greedy strategies. 
The problem of client selection in federated learning settings was studied in \citet{ye2021client} and posed as a submodular optimization problem. Message-passing algorithms, which use consensus-type updates, are studied in~\citet{robey2021optimal}.

Linear-programming-based worst-case constructions reveal how network topology limits greedy performance~\citep{downie2022programming}. This formulation 
allows for an explicit link between graph modifications and worst-case performance; a rather surprising by-product of this is the observation that adding communication edges can decrease greedy's worst-case performance, and related perspectives on altering local incentives/utilities in multiagent optimization appear in~\citet{brown2019feasibility}.

Notions of curvature along with multilinear extension followed by distributed Pipage rounding are used in
~\citep{rezazadeh2023distributed}. 
Using 
higher-order notions such as the supermodularity of conditioning, efficient randomized distributed algorithms, which exploit redundancy and conditioning,  are developed in~\citet{corah2018distributed}.  Communication-cost studies prove that finding minimum-cost information graphs is NP-hard but admit approximations and multi-hop routing via minimum propagation trees~\citep{castiglia2019distributed}.

Optimal parallel execution structures show that partitioning $n$ agents into equally sized batches often suffices, and sparse graphs such as Tur\'{a}n graphs can match dense ones in competitive ratio; incorporating total curvature $\lambda$ tightens these bounds further~\citep{sun2020distributed}. Continuous relaxations through the multilinear extension
support Jacobi-style projected stochastic gradients with a $1/2$-approximation~\citep{du2022jacobi}. Building on such relaxations, privacy-preserving stochastic rounding for distributed submodular optimization trades an optimality factor $\approx 1-(1/e)^{1-\kappa_{\max}\sqrt{1-\beta}}$ for a guaranteed $\beta$-privacy level, where $\kappa_{\max}$ is a curvature constant and $\beta\in[0,1]$~\citep{rezazadeh2022distributed}.

Strategic information sharing, transmitting only the element with highest marginal gain, for instance can improve the bound even when communication is limited~\citep{grimsman2018strategic}. Resource-aware algorithms such as Resource-Aware distributed Greedy for multi-robot teams balance solution quality against strict budgets on computation, memory, and bandwidth by exploiting doubly-submodular objectives and the \emph{centralization-of-information-among-non-neighbors} measure
which quantifies the penalty of decentralization~\citep{xu2022resource}, and distributed leader-selection mechanisms for multi-robot sampling under bandwidth constraints are studied in~\citet{luo2016distributed}. Closely related is the problem of jointly coordinating actions and the underlying information graph, which has been formulated as a self-configurable distributed submodular problem~\citep{xu2024performance}.  

On the minimization side, \citet{testa2018distributed} develop algorithms for distributed submodular minimization in asynchronous, unreliable, and time-varying directed networks.

Developing distributed counterparts of $\texttt{StochasticGreedy}$ (Theorem~\ref{thm:stochastic-greedy}) that achieve scalability without requiring global knowledge of the ground set remains largely unexplored, as does extending distributed guarantees to non-monotone objectives subject to matroid constraints.

\subsection{Game theory}

The central problem at the intersection of game theory and submodular optimization is to understand the collective behavior of multi-player games where individuals make strategic decisions based on personal utility functions, assumed to be submodular. The problem is well motivated, as submodularity has commonalities with convexity and concavity, and in this sense, one hopes that the structure allows one to state existence results for Nash equilibrium, similar to the setting of convex/concave games~\citep{rosen1965existence}. This turns out to be the case, as characterized in~\citet{topkis1979equilibrium}. There is a large body of literature since seeking applications of submodular games in various contexts; while we do not plan to investigate these in detail in this survey, we list a few within the control community. In \citep{MR4094752} submodularity  is used to prove existence of 
Nash equilibrium in a class of 
stochastic games
without relying on Markovian structure, where 
submodularity of the underlying cost functions is essentially used to show that best-reply maps preserve the order of admissible strategies. 
In~\citet{como2021optimal} it is shown that the problem of finding the minimum number of players to target to drive a supermodular game to its greatest Nash equilibrium is NP-complete and proposes an efficient iterative algorithm for its solution. The work~\citet{karaca2017game} characterizes 
 the Nash equilibria of the pay-as-bid electricity market mechanism with submodular utility functions as Pareto-optimal core outcomes. 
In~\citet{karaca2018exploiting}, it is shown how weak supermodularity can be exploited to design coalition-proof mechanisms in some market auctions. 

Another interesting direction has to do with mechanism design, where principled control and intervention strategies are developed to guide the system toward desirable outcomes, e.g.,~\citep{karaca2017game, paccagnan2019utility}. In particular, 
studying the efficiency of an equilibrium has been another direction of interest, 
often quantified through metrics like the Price of Anarchy~\citep{he2013price}. This naturally leads to designing agent-level utility functions that align local incentives with system-level goals~\citep{paccagnan2019utility,paccagnan2021utility}, as well as studying when network-game objectives (e.g., anticoordination) exhibit approximate or average submodularity~\citep{das2022approximate,das2024average}. 

Beyond broadening the range of applications, which have remained relatively limited within the control community, and accounting for the impact of various matroid constraints, a  research direction is to explore refined equilibrium concepts, such as strengthened versions of Nash equilibrium and coarse correlated equilibria, within the framework of submodular games, and more broadly to incorporate modern distributed-learning and behavioral dynamics settings~\citep{liu2022understanding}.

\subsection{System theory}

Minimal controllability/observability problems often consider selecting minimal sets of input nodes (actuators) to guarantee fundamental properties such as controllability, disturbance decoupling, and pole placement  in networked cyber-physical systems~\citep{liu2011controllability,olshevsky2014minimal,MR4339621,pequito2017robust, summers2015submodularity,liu2018partial,tzoumas2015minimal,de2016growing,clark2017toward}. This focus later evolved to incorporate quantitative control energy metrics, such as those derived from the controllability Gramian~\citep{pasqualetti2014controllability}, aiming for more cost-effective control beyond mere system steerability. The mathematical property of submodularity for many of these metrics (e.g., the log-determinant or rank of the Gramian) proved crucial for developing efficient approximation algorithms with theoretical guarantees.

That said, in~\citet{olshevsky2017non}, it was rigorously demonstrated that the average control energy (quantified by the trace of the inverse controllability Gramian) is not necessarily supermodular, directly contradicting earlier claims and highlighting a key complexity in this specific optimization domain; alternative techniques to address such issues have since been explored, for instance in~\citet{siami2020deterministic}. More recent work has further extended these problems to include aspects such as robustness against actuator failures, ensuring partial observability, and optimizing network topology for average controllability in higher-order and temporal networks.
To name a few of the volume of related publications, in \citet{pequito2017robust}, it is shown that a robust minimal controllability problem (determining the minimum number of state variables to be actuated under possible failures) can be cast as a set multi-cover problem. 
Edge protection in bilinear dynamical networks under malicious attack is studied via a submodular formulation in~\citet{de2021bilinear}.
The work~\citet{de2016growing} studies the problem of adding nodes to networks to preserve controllability. The partial observability problem, i.e., how to ensure that a subset of the state in a linear system can be reconstructed when full observability is not possible,
is studied in~\citet{liu2018partial}, where the problem has been recast as a submodular set cover problem. In~\citet{MR4339621}, the design of networks for maximizing  
controllability measures is addressed. Designing topologies for average controllability in networked systems subject to communication constraints is also studied in~\citet{srighakollapu2019optimizing}, and aspects of structural controllability in temporal networks is considered in~\citet{srighakollapu2021optimizing}. 

While several controllability metrics under strong Gramian‐spectral assumptions exhibit \emph{exact} submodularity, some practically relevant objectives fail to do so \citep{summers2019performance}, leaving a web of gaps to close.  Key directions include characterizing weaker notions of sub/supermodularity conditions when considering other relevant forms of energy or potentials, and perhaps proving whether the submodularity ratio can be pinned below a universal constant, or is system–specific. Another interesting avenue of research is to consider curvature‐aware approximation, and establishing whether the forward– and reverse–curvature bounds that now yield guarantees are minimax‐optimal, or whether curvature‐independent (e.g.\ logarithmic) factors are achievable or provably impossible. Obtaining algorithmic lower bounds and proving explicit hardness results are of interest, as they illustrate whether current greedy‐family algorithms are optimal. Finally, structural uncertainties, time‐varying or sampled‐data models, and extending the above questions to settings with matroid‐type structural controllability constraints are possible directions of research. 

\subsection{Resource allocation}
In resource allocation problems, a number of individuals take local decisions to collectively pursue a global objective, often under constraints imposed by limited information or sensing capabilities. The typical approach involves using game-theoretic formulations, where individuals are assigned utility functions designed to guide their self-interested actions. A fundamental concern is then to quantify and improve the efficiency of the emergent equilibria relative to the optimal global outcome, which is commonly measured by the Price of Anarchy. Since 
diminishing returns naturally occur in utility-driven phenomena, submodularity is frequently encountered in such settings and is key to analyzing the efficiency guarantees of emergent stable behaviors. For instance,~\citet{marden2014role,marden2016role} studies the role of informational restrictions on multiagent coordination using submodular objectives. The efficiency of distributed resource allocation problems for the case of submodular welfare functions is studied in~\citet{marden2014generalized}, building on the distributed welfare games of~\citet{JRM-AW:13} and the valid-utility game framework of~\citet{vetta2002nash}, in which any Nash equilibrium of a game with submodular social welfare achieves at least half of the optimal welfare, and~\citet{paccagnan2018importance} studies submodular resource allocation under uncertain or inaccessible information together with the complexity of finding Nash equilibria for submodular objectives in distributed resource allocation. Utility designs that improve the performance of the greedy algorithm in resource allocation games without increasing its runtime are studied in~\citet{konda2024optimal}. Stability and fairness for submodular utility functions in resource allocation is another important topic which is studied in many references, for example~\citet{kyriakis2020stability, marden2012state,paccagnan2019utility,paccagnan2021utility,grimsman2022valid}. 

Submodular structure also pervades resource allocation in energy and communication networks. Energy-storage placement in power networks admits a submodular formulation~\citep{qin2018submodularity} (with related flexibility-scheduling and pricing results for distributed energy resources in~\citet{qin2018automatic}), while strategic scheduling of thermal energy profiles for residential consumers is naturally a non-monotone submodular maximization problem~\citep{albert2015strategic}; cooperative scheduling for microgrids under peak-demand pricing has been studied through the same lens~\citep{valibeygi2019cooperative}. Selecting a minimal set of generators for small-signal stability also admits a submodular formulation~\citep{liu2016mingen}, and a submodular energy-function approach to controlled islanding with provable transient stability is developed in~\citet{cheng2023submodular}. In communications, submodularity has been used in scheduling and channel-allocation problems~\citep{hou2011optimality,xu2023optimal,wang2018optimal} and in multi-sensor transmission-power control over rate-limited channels~\citep{li2019multi}.

Notably, most work in this area is on monotone submodular utility functions, and non-monotone settings are rarely considered; such settings are of importance, for instance, when utility functions are only monotone beyond a certain threshold. Another important open direction is the study of uncertainty in both the utility design and actions taken or unavailable system-level information.

\subsection{Social networks and opinion dynamics}
Submodularity arises in social networks primarily because influence, coverage, and utility functions exhibit natural diminishing returns. 
Maximizing the spread of influence is studied in the classical work~\citep{kempe2003maximizing}, where 
some aspects of the problem are shown to be submodular, and in a volume of papers in the literature on computer and social sciences. 
Closer to decision and control, submodularity has been used, for instance, in leader selection problems for opinion dynamics: in~\citet{yi2021shifting}, leaders are selected in social networks governed by Friedkin--Johnsen dynamics with the goal of optimally shifting follower opinions, with the underlying objective shown to admit submodular surrogates.

Given a networked dynamics, the problem of edge selection, both for activation and protection in adversarial settings, while minimizing resource usage, falls within the context of submodularity. Examples include bilinear formulations in~\citet{chanekar2022encoding} and~\citet{MR4703076}.
The problem of placing resources to influence a social network is also extensively studied, as seen in~\citep{MR4471733,MR4382153,MR4375076,MR4051620,MR3625706,MR4431471,MR4205590,MR4150030,MR4192552}.

Selecting a subset of nodes in consensus networks to minimize ``coherence'', a measure of the variance of deviation from a desired trajectory, is addressed in~\citet{mackin2018second}. In~\citet{rezazadeh2021multi}, distributed gradient strategies are developed for the multilinear extension of constrained submodular maximization problems using a maximum consensus message-passing scheme.

In~\citet{MR4451612}, the problem of controlling segregation in social networks via exogenous incentives is studied by formulating an edge formation problem shown to be submodular.
Other examples related to opinion dynamics include~\citet{MR4413050}, where a susceptible-infected-recovered (SIR) epidemic model is analyzed and the spread of infection is bounded by a supermodular function, which is subsequently minimized to reduce infections. Similarly, in~\citet{MR3580811}, evolutionary dynamics on networks are studied, where submodularity with respect to the mutant set is exploited to analyze evolutionary outcomes.
In~\citet{bini2022graph}, heuristic strategies are proposed for targeting nodes in opinion networks to steer global behavior. Graph-structural heuristics such as degree and closeness centrality are used to identify influential nodes, along with submodularity-based approaches. Notably, it is shown that there are computational limitations to relying solely on greedy strategies.

Beyond influence and opinion dynamics, submodular structure appears across a range of network-analytic problems. Achieving synchronization in networks of phase-coupled oscillators reduces to submodular maximization over partition matroids~\citep{clark2017toward,baggio2024optimal}; optimizing coherence in composite networks~\citep{mackin2017optimizing}, selecting edges for network design~\citep{summers2015submodularity}, and a variety of objectives associated with random walks on graphs~\citep{clark2019structure} are also submodular. Relatedly, the problem of finding the smallest submatrix whose smallest eigenvalue exceeds a threshold can be cast as a submodular set-cover problem~\citep{clark2018maximizing}, with applications to consensus convergence rates and graph design. On the influence side, further submodular formulations appear in relative and fractional influence maximization~\citep{zhao2015relative,umrawal2023fractional}.

Among the promising directions for further investigation are the development of submodular optimization algorithms that remain effective under strategic interactions, potentially through the incorporation of game-theoretic constraints or incentive-compatible mechanisms, and the study of natural objective functions arising in social network settings that are non-monotone and not necessarily submodular, yet admit approximation via submodular or difference-of-submodular surrogates. On this note, and closing with the reference that we started with, we view the latter as particularly important, since many natural influence functions are not submodular in general; see~\citet{kempe2003maximizing} and also~\citet{mossel2010submodularity} and references thereafter.

\subsection{Informative path planning}

Informative Path Planning (IPP) is a class of problems that seeks to find a path that maximizes a specific information-based utility function, such as sensor information gain or uncertainty reduction, subject to constraints on path cost, which could be distance, time, or energy. A key characteristic of IPP is that the utility function is often submodular, meaning it exhibits a diminishing returns property: each new location or observation provides less marginal benefit than the last. This distinguishes IPP from simpler path planning problems that might, for instance, aim to find the shortest path between two points.

Unlike classical submodular optimization problems where the ground set of items is fixed, IPP involves simultaneously determining which vertices to visit and the optimal sequence in which to visit them. This coupled decision-making process significantly increases the problem's complexity, as the solution space is not only a subset of items but a specific permutation of those items that forms a valid path.

IPP problems are broadly categorized into two main types based on the nature of the environment. In continuous space planning, the environment is continuous and the number of potential points to visit is infinite. Algorithms for this problem type often leverage ideas from sampling-based planning algorithms prevalent in robotics. By strategically sampling points in the continuous space, these methods convert the problem into a discrete one that can be more tractably solved. Examples of this approach include uncertainty-driven sampling, as explored by \citep{hollinger2014sampling}, which guides the sampling process toward areas of high information gain. In discrete space planning, this involves a finite set of potential vertices or locations to visit, such as nodes on a grid or a predefined set of landmarks. The problem then becomes one of selecting a subset of these nodes and ordering them optimally. A common approach is to model the problem as a submodular maximization problem with path-related constraints. For instance, some formulations treat the problem as finding a maximum-weight Hamiltonian cycle/path in the graph, where the weight is a submodular function of the cycle edges \citep{jawaid2013maximum,jawaid2015informative}. Other works have framed the TSP with neighborhoods (TSPN) as a submodular maximization problem subject to a spanning tree constraint (which can be cast as a graphic matroid) \citep{clark2019submodular}.

Recent research in IPP has expanded into several advanced directions. For applications such as environmental monitoring or large-scale exploration, multiple robots must coordinate their actions to maximize information gain collectively, introducing challenges around communication and resource allocation. Works such as \citet{corah2019distributed,corah2021volumetric} and \citet{liu2020monitoring} address these challenges by modeling multi-robot constraints using matroids, which provide a powerful framework for handling various forms of resource and independence constraints. A distributed randomized algorithm for submodular maximization with a partition matroid constraint, specifically applied to multi-robot coverage of 3D environments, is proposed in \citet{corah2021volumetric}. In~\citet{jorgensen2024matroid}, the problem of deploying a team of robots under operational considerations is studied, where probabilistic survival constraints are encoded as a matroid; the overall objective, to find a set of paths that maximizes the expected number of sites visited, is then formulated as a submodular problem, enabling constant-factor approximation algorithms.

In dynamic or uncertain environments, the information gain at a location may not be known a priori, and stochastic or online algorithms are used to make decisions in real time. This is often framed using adaptive submodularity, as seen in touch-based localization problems~\citep{golovin2011adaptive,javdani2013efficient}, or through stochastic optimization techniques for discrete planning~\citep{suh2016efficient}. A distinct line of inquiry investigates the long-term behavior of informative paths; for instance, \citet{dressel2018optimality} explores conditions under which an information-gathering trajectory becomes ergodic. With the rise of machine learning, researchers have also applied data-driven approaches to learn the utility functions or planning policies directly from experience~\citep{choudhury2017learning}.

Algorithmic approaches for IPP problems include recursive greedy schemes~\citep{binney2013optimizing} and, more recently, distributed non-monotone submodular optimization techniques relevant for energy-constrained systems~\citep{cai2021non,cai2023energy}. In~\citet{ghaffari2019sampling,corah2021scalable,lim2016adaptive,guo2020actuator}, pruning strategies are developed for settings where the underlying metric is submodular.

Submodularity also appears across a range of adjacent navigation and perception problems. In Simultaneous Localization and Mapping (SLAM), anchor selection can be formulated as a cardinality-constrained submodular maximization problem~\citep{chen2021anchor,kaveti2024oasis}, and submodular structure has also been exploited for feature selection in the SLAM front-end~\citep{jiao2021greedy,zhao2020good}. Closely related uses include informative path planning for 3D reconstruction~\citep{suresh2024greedy} and active perception~\citep{ghasemi2019online}. In multi-robot planning, ensuring trajectory diversity~\citep{arora2015emergency,choudhury2015planner}, reliability-aware coverage planning~\citep{li2021asynchronous}, and decision-theoretic vehicle routing~\citep{shi2023decision} have been formulated through a submodular lens, as has a risk-aware variant of the traveling salesman problem~\citep{balasubramanian2021risk}.

\subsection{Where submodularity fails}\label{subsec:fails}

The submodular structure invoked throughout this section is not universal: several natural objectives in decision and control either provably fail to be sub- or supermodular, or are sub-/supermodular only under restrictive assumptions. Table~\ref{tab:fails} catalogues the most prominent cases reported in the literature, together with the surrogate notion that recovers a usable approximation guarantee, ranging from curvature and the submodularity ratio (Section~\ref{sec:submod-functions}) to approximate supermodularity and the substitution of a different functional of the same operator.

Several of these counterexamples overturned conjectures from the early literature. The claim that the average control energy is supermodular in the actuator set, suggested by~\citet{cortesi2014submodularity}, was shown to be false by~\citet{olshevsky2017non}; the log-determinant and rank of the same controllability Gramian do remain submodular~\citep{summers2015submodularity}, which is why log-det-based actuator placement enjoys greedy guarantees while inverse-Gramian formulations do not. A closely related counterexample is the trace of the rigidity Gramian pseudoinverse~\citep{shames2015rigid,shames2019corrections}. In Kalman filtering, the trace of the one-step error covariance is only weakly submodular, so the standard $(1-1/e)$ bound is replaced by the submodularity-ratio bound $(1-e^{-\gamma})$~\citep{das2011submodular}, here enabled by the weak submodularity of the one-step KF objective established in~\citet{hashemi2020randomized}; the steady-state error covariance fails submodularity even in this weaker sense~\citep{zhang2017sensor}, and no comparable instance-dependent guarantee is currently available. In Markov decision processes, supermodularity of the $Q$-function is sufficient but not necessary for monotone optimal policies, and~\citet{MR4569670} exhibits MDPs in which the optimal policy is monotone yet the $Q$-function is not supermodular. Finally, \citet{summers2016actuator} establishes that LQR, LQG, and dynamic-game actuator-selection objectives are neither sub- nor supermodular in general; \emph{approximate} supermodularity is nevertheless available, together with associated guarantees, in several practically relevant cases: for LQG objectives~\citep{tzoumas2020lqg} and, analogously, for the sparsity-constrained LQR problem~\citep{nishida2024sparsity} (see Section~\ref{subsec:sensor-scheduling}).

\begin{table*}[ht!]
    \scriptsize
    \centering
    \caption{Selected objectives in decision and control for which (super)submodularity provably fails or holds only weakly, together with the surrogate notion that yields a usable approximation guarantee. The list is illustrative rather than exhaustive; cross-references point back to the per-application subsections.}
    \label{tab:fails}
    \renewcommand{\arraystretch}{1.3}
    \begin{tabular}{@{}>{\raggedright\arraybackslash}p{4.4cm}>{\raggedright\arraybackslash}p{3.1cm}>{\raggedright\arraybackslash}p{2.8cm}>{\raggedright\arraybackslash}p{5.4cm}@{}}
        \toprule
        \textbf{Objective} & \textbf{Setting} & \textbf{Status} & \textbf{What salvages it} \\
        \midrule
        Trace of one-step KF error covariance & Sensor scheduling (\S\ref{subsec:sensor-scheduling}) & Weakly submodular & Submodularity ratio $\gamma$: $(1-e^{-\gamma})$ bound for $\texttt{Greedy}$~\citep{hashemi2020randomized} \\
        Trace of steady-state KF error covariance & Sensor scheduling (\S\ref{subsec:sensor-scheduling}) & Not submodular & No general bound known; greedy as a heuristic only~\citep{zhang2017sensor} \\
        Inverse controllability Gramian (avg.\ control energy) & Actuator selection (\S\ref{subsec:sensor-scheduling}) & Not supermodular~\citep{olshevsky2017non} & Substitute another Gramian functional ($\log\det$, rank), which remains submodular~\citep{cortesi2014submodularity,summers2015submodularity} \\
        Trace of rigidity Gramian pseudoinverse & Anchor/sensor placement & Neither sub- nor supermodular & Restricted formulations only~\citep{shames2015rigid,shames2019corrections} \\
        $Q$-function in MDPs & Optimal policy / RL & Not supermodular & Direct verification of policy monotonicity; no general submodular surrogate~\citep{MR4569670} \\
        LQR/LQG/dynamic-game actuator selection & Control co-design (\S\ref{subsec:sensor-scheduling}) & Neither sub- nor supermodular~\citep{summers2016actuator} & Approximate supermodularity of LQG and sparsity-constrained LQR objectives~\citep{tzoumas2020lqg,nishida2024sparsity} \\
        \bottomrule
    \end{tabular}
\end{table*}

\subsection{Further connections}

\emph{DR-submodularity.}
There is little work within the control systems community where the notion of continuous submodularity, introduced in Section~\ref{sec:DR-sub}, is used. In~\citet{sahabandu2019dynamic}, which considers a multi-player dynamic game with imperfect information between a group of defenders and adversaries, the utility functions are shown to satisfy the DR-submodularity property. In recent work~\citep{bunton2022give,bunton2022joint}, a class of problems inspired by regularized optimization, with applications in sparsity-promoting designs and signal denoising, is studied, combining both continuous and discrete components. It is worth pointing out that~\citep{bunton2022joint} only studies the minimization problem; to the best of our knowledge, the maximization problem for such functions has not been studied. We anticipate a growing body of applications of continuous submodular optimization within control theory.

\emph{Reinforcement learning, learning-based decision making, and bandit feedback.}
With the recent reemergence of reinforcement learning as a key tool for decision making in unknown environments, recent work has explored the intersection of submodular optimization with this area. The works~\citet{prajapat2023submodular,de2024global} study Markov decision processes in which the reward is a (monotone) submodular function of the set of states visited along a trajectory, rather than an additive per-step reward, capturing diminishing returns in applications such as coverage control and informative path planning; the objective is to maximize the expected return in an episodic finite-horizon setting. \citet{prajapat2023submodular} propose a policy-gradient method (SubPO) that greedily maximizes marginal gains, while~\citet{de2024global} handle globally defined trajectory rewards that may be submodular, supermodular, or mixed through a submodular semi-gradient scheme that reduces the problem to a sequence of classical reinforcement-learning problems with curvature-dependent guarantees. Under additional assumptions the expected reward becomes DR-submodular, yielding constant-factor approximation guarantees even though the problem is NP-hard to approximate in general. Further work at the intersection of Markov decision processes and submodular optimization includes scalable policy optimization~\citep{sahabandu2021scalable}, hardness and approximability of sensor selection in mixed-observable MDPs~\citep{bhargav2023complexity}, maximizing the size of the reachable state space in factored MDPs~\citep{fiscko2023maximizing}, clustered control of transition-independent MDPs~\citep{fiscko2023clustered}, gradient-bounded dynamic programming for submodular value functions~\citep{lebedev2022gradient}, and a submodular formulation of the multiple-stopping problem in Markov chains that admits an efficient policy~\citep{krishnamurthy2018multiple}. A distinct approach to tracking and monitoring in partially observable environments uses bandit formulations: in~\citep{streeter2008online,xu2023online,xu2023bandit,xu2024leveraging,bedi2020efficient}, repeated submodular maximization under bandit feedback is considered, where at each round the environment presents a new submodular function (possibly adversarially) and the agent must select a subset of actions (e.g., robot assignments) without observing the full function. Bandit-adjacent combinatorial problems with submodular structure also include stochastic probing~\citep{chugg2019submodular} and restless bandits~\citep{akbarzadeh2019restless}.

Submodular structure further arises in a range of learning- and interaction-themed problems at the boundary between control and machine learning: human--robot collaboration~\citep{shi2024inverse}, fleet-scale active learning~\citep{akcin2023fleet}, and information-theoretic optimization of the questions a robot asks a human to reduce its uncertainty~\citep{biyik2022learning}. On the system-identification and data-driven side, submodular maximization has been used for experimental design in Boolean control networks~\citep{busetto2014experimental}, data-driven sampling of trajectories for estimating backward reachable sets of unknown nonlinear systems~\citep{chakrabarty2018data}, and data selection for Bayesian learning~\citep{ye2021near}.

\section{Conclusions}\label{sec:conclusion}

\emph{Identifying submodularity is challenging.} Once an objective has been shown to be submodular and the constraint set fits a familiar structure (cardinality, matroid, intersection of matroids, knapsack), the algorithmic toolbox of Section~\ref{sec:algorithms} applies directly: greedy and its variants match the best attainable factor for cardinality and knapsack constraints and remain the standard scalable choice under matroid constraints (where continuous-relaxation methods close the remaining gap at higher cost), with lazy evaluation and stochastic subsampling available for scale. The harder task in the applications of Section~\ref{sec:applications} is establishing submodularity in the first place, and deciding what to do when it nearly, but not quite, holds.

\emph{A practitioner's quick guide.} For a reader approaching a new selection problem, the choice of algorithm follows the structure of the objective and the constraint. Monotone submodular maximization with a cardinality constraint admits $\texttt{Greedy}$ with the $(1-1/e)$ guarantee of Theorem~\ref{thm:greedy-cardinality}; with a general matroid constraint the same procedure yields the $1/2$ guarantee of Theorem~\ref{thm:greedy-matroid}, and continuous-relaxation methods recover $(1-1/e)$. Non-monotone objectives are best handled by $\texttt{RandomGreedy}$ under a cardinality constraint (Theorem~\ref{thm:random-greedy}, factor $1/e$) and by $\texttt{RandomizedUSM}$, the randomized double greedy, in the unconstrained case (Theorem~\ref{thm:randomized-usm}, factor $1/2$). When the ground set is large, $\texttt{StochasticGreedy}$ removes the dependence of the runtime on the budget $k$ at the cost of an $\epsilon$ loss (Theorem~\ref{thm:stochastic-greedy}), and $\texttt{LazyGreedy}$ delivers substantial practical speed-ups with no loss of guarantee. When monotonicity or submodularity holds only weakly, the same $\texttt{Greedy}$ procedure continues to apply, with the worst-case factor refined by curvature $c$ to $\tfrac{1}{c}(1-e^{-c})$ or by the submodularity ratio $\gamma$ to $(1-e^{-\gamma})$. Table~\ref{tab:applications_taxonomy} maps these tools onto the application areas of Section~\ref{sec:applications}.

\emph{Control-theoretic objectives often sit at the boundary of submodularity.} Many natural objectives in decision and control fail to be exactly sub- or supermodular; Table~\ref{tab:fails} collects several of the prominent cases, including the trace of the one-step Kalman filter error covariance, the average control energy obtained from the controllability Gramian inverse, the $Q$-function of certain Markov decision processes, and the actuator-selection objectives of LQR, LQG, and dynamic games. Several of these counterexamples overturned earlier conjectures, and the appropriate response depends on the case: substituting a different functional of the same operator (for instance the log-determinant or rank in place of the Gramian inverse), invoking curvature or the submodularity ratio to refine the greedy guarantee, or appealing to approximate supermodularity. There is no universally best surrogate, but the family above covers the situations encountered in practice in this survey.

\emph{Distributed greedy depends on the information graph; robust greedy depends on the adversary model.} The gap between centralized and decentralized greedy is governed by parameters of the information graph such as the clique number, chromatic number, and fractional independence number, rather than by the constraint matroid. In robust formulations, the cost of $\tau$-resilient selection is well understood for cardinality removal but largely open under richer adversarial and communication models.

Three cross-cutting open directions stand out from this survey.

\emph{Sharper characterization of control-theoretic objectives.} The gap between one-step and steady-state Kalman filtering~\citep{hashemi2020randomized,zhang2017sensor}, the boundary of the approximate-supermodularity regime for LQG co-design~\citep{tzoumas2020lqg} and sparsity-constrained LQR~\citep{nishida2024sparsity}, and the conditions under which the $Q$-function in MDPs admits a useful submodular surrogate~\citep{MR4569670} are all currently open. Closing these gaps would extend formal guarantees from the cases listed in Table~\ref{tab:fails} to a much broader class of decision and control objectives, including those that arise in nonlinear and time-varying systems, and in policy-conditioned formulations over Markov decision processes.

\emph{Distributed algorithms with provable guarantees under partial knowledge.} The clique- and chromatic-number characterizations of~\citet{gharesifard2017distributed} establish a clear dependence of distributed greedy on the information graph, but tightening these bounds, and developing distributed counterparts of $\texttt{StochasticGreedy}$ and $\texttt{RandomGreedy}$ that retain non-trivial guarantees without global knowledge of the ground set or the objective, is largely open, particularly for non-monotone objectives subject to matroid constraints.

\emph{Robust and sequential maximization beyond cardinality-removal adversaries.} The $\tau$-resilient framework of~\citet{JBO-ASS-RU:16} captures only one mode of adversarial intervention; more realistic models, in which the adversary can alter the underlying information graph, corrupt the value oracle, or exploit time-varying communication patterns, remain largely unexplored, and are increasingly relevant as submodular techniques migrate into safety-critical autonomy stacks. In each of these directions, curvature and approximate-submodularity tools are likely to play a central role.

\section*{Funding}
This work was supported in part by the Natural Sciences and Engineering Research Council of Canada (NSERC) and the Canada Research Chairs Program.

\section*{Declaration of Generative AI Use}
During the final preparation of this work the authors used Generative AI to assist with editorial restructuring, copy-editing, and identification of style inconsistencies. After using this tool, the authors reviewed and edited all content and take full responsibility for the content of the published article.

\end{document}